\newcommand{\T}{\mathbb{T}(\ell)}
\newcommand{\sat}{{\textnormal{sat}}}
\newcommand{\reg}{\textnormal{reg}}
\newcommand{\Sk}{S^{(\ell,n)}} 
\newcommand{\PP}{\mathcal P}
\numberwithin{equation}{section}
\newtheorem{theorem}{Theorem}[section]
\newtheorem{lemma}[theorem]{Lemma} 
\newtheorem{corollary}[theorem]{Corollary} 
\newtheorem{proposition}[theorem]{Proposition} 
\theoremstyle{definition}
\newtheorem{definition}[theorem]{Definition} 
\newtheorem{example}[theorem]{Example} 
\theoremstyle{remark}
\newtheorem{remark}[theorem]{Remark}
\DeclareMathAlphabet{\mathpzc}{OT1}{pzc}{m}{it}
\begin{document}

\title[Quasi-stable and Borel-fixed ideals]{Quasi-stable ideals and Borel-fixed ideals with a given Hilbert Polynomial} 
\author{Cristina Bertone}

\address{Dipartimento di Matematica \lq\lq G. Peano\rq\rq, via Carlo Alberto 10, Torino, Italy}
\email{cristina.bertone@unito.it}

\subjclass[2010]{13P10, 14Q20, 12Y05, 05E40}
\keywords{Quasi-stable ideal, Borel-fixed ideal, Pommaret basis, Hilbert polynomial}
\maketitle

\begin{abstract} 
The present  paper  investigates properties of quasi-stable ideals and of Borel-fixed ideals in a polynomial ring $k[x_0,\dots,x_n]$, in order to design two algorithms: the first one   takes as  input $n$ and an admissible {Hilbert polynomial} $P(z)$, and outputs the complete list of saturated quasi-stable ideals in the chosen polynomial ring with the given Hilbert polynomial. The second algorithm has an extra input, the characteristic of the field $k$, and outputs the complete list of saturated Borel-fixed ideals in $k[x_0,\dots,x_n]$ with Hilbert polynomial $P(z)$. The key tool for the proof of both algorithms is the combinatorial structure of a quasi-stable ideal,  in particular we use a special  set of generators for the considered ideals, the  {Pommaret basis}.
\end{abstract}

\section{Introduction}
In the present paper, we are interested in \emph{quasi-stable} monomial ideals in a polynomial ring $k[x_0,\dots,x_n]$. These ideals have a very rich combinatorial structure (summarized in Theorem \ref{thm:BorelEquiv}) and appear in literature under several different names: for instance, they are called \emph{Borel type ideals} in \cite{HPV03} or \emph{monomial ideals of nested type} in \cite{BG06} or \emph{weakly stable ideals} \cite{CS05}. We are interested in these special type of monomial ideals because among them we can find the \emph{Borel-fixed} ones: these are the ideals which are stable under the action of the Borel subgroup of invertible matrices. Borel-fixedness is a property depending on the characteristic of the field $k$ (Example \ref{ex:BorelChar}, (\ref{BorelChar_i})), while quasi-stability is independent of the characteristic. Borel-fixed ideals are well-known and used in algebra and also in algebraic geometry because given an ideal $I$ and a term order, the \emph{generic initial ideal of $I$} is Borel-fixed. 

Further, under the hypotheses that $k$ is infinite and has characteristic 0, in some recent papers \cite{CR},\cite{BCLR}, \cite{BLR} the properties of Borel-fixed ideals were exploited to bring on a computational and effective study of the \emph{Hilbert scheme}, the scheme parameterizing flat families of saturated ideals in $k[x_0,\dots,x_n]$ having the same Hilbert polynomial $P(z)$. This new direct approach led to a better insight on  Hilbert schemes: see for instance \cite{BCR}, \cite{FR}. In these two papers a key tool to explicit study specific Hilbert schemes is the computation of  the list of Borel-fixed ideals in $k[x_0,\dots,x_n]$ with a given Hilbert polynomial.  Under the hypothesis that the characteristic of $k$ is 0, this list can be obtained by the algorithm presented in \cite{CLMR11} and later improved in \cite{L} or by the algorithm presented in \cite{MN}.

In \cite{CMR13}, the authors generalize some of the computational techniques developed in \cite{CR}, \cite{BCLR} to the case of quasi-stable ideals. Hence, there is a missing step in order to generalize  the open cover of the Hilbert scheme defined in \cite{BLR} and apply it  also in the case of a field $k$ of arbitrary cha\-rac\-te\-ri\-stic: the explicit computation of the complete list of saturated  quasi-stable ideals (and in particular of Borel-fixed ideals)   in the polynomial ring $k[x_0,\dots,x_n]$ with a given Hilbert polynomial. 

The goal of the present  paper is filling this gap,   designing two algorithms: they both  take as an input the number of variables of the polynomial ring and an admissible Hilbert polynomial. The output for one algorithm is the complete list of saturated quasi-stable ideals in the chosen polynomial ring with the target Hilbert polynomial, while the output for the other one is  the complete list of saturated  Borel-fixed ideals with the same features, taking in input also the characteristic of the coefficient field (Algorithms  \ref{alg:QSideals} and \ref{alg:PBorel}).

For our goal, the most useful property of a quasi-stable ideal is that it has a \emph{Pommaret basis}, a special set of generators which can be easily computed from its minimal monomial basis. A Pommaret basis has got many useful features (see \S \ref{sec:Pommaret}) that allows us to adapt the arguments of \cite{CLMR11} for  Borel-fixed ideals in characteristic 0 to both quasi-stable (\S \ref{sec:AlgorithmQS}) and Borel-fixed  ideals in arbitrary characteristic (\S \ref{sec:AlgoBorel}). 

The algorithms we obtain can be easily implemented on any Computer Algebra System. We wrote a non-optimized implementation in Maple \cite{Maple}, that could compute the complete list of quasi-stable and Borel-fixed ideals in several non-trivial cases. We briefly comment two of these computations in \S \ref{sec:examples}.

\section{Notations and generalities}

For every $0\leq \ell< n$, we consider the variables $x_\ell,\dots,x_n$, ordered as $x_\ell<\cdots < x_{n-1} < x_n$ (see \cite{Seiler2009I,Seiler2009II}). This is a non-standard way to sort the variables, but it is suitable for our purposes. In some of the papers we refer to, variables are ordered in the opposite way, hence the interested reader should pay attention to this when browsing a reference.

A \emph{term} is a power product $x^\alpha= x_\ell^ {\alpha_\ell}\cdots x_n^{\alpha_n}$. We denote by $\mathbb T(\ell)$ the set of terms in the variables $x_\ell,\dots,x_n$.
We denote by $\max(x^\alpha)$ the biggest variable that appears with non-zero exponent  in $x^\alpha$ and, analogously, $\min(x^\alpha)$ is the smallest variable
that appears with non-zero exponent in $x^\alpha$. The \emph{degree} of a term is $\deg(x^\alpha) =\sum_{i=\ell}^n \alpha_i =\vert\alpha\vert$.

Let $k$ be a  field and consider the polynomial ring $\Sk:=k[x_\ell,\dots,x_n]$ with the standard graduation. We write $\Sk_t$ for the set of homogeneous polynomials of degree $t$ in $\Sk$. Since $\Sk=\oplus_{i\geq 0} \Sk_i$, we define $\Sk_{\geq t}:=\oplus_{i\geq t} \Sk_i$. The ideals we consider in $\Sk$ are always homogeneous. If $I\subset \Sk$ is a homogeneous ideal, we write $I_t$ for $I\cap \Sk_t$ and $I_{\geq t}$ for $I\cap \Sk_{\geq t}$. The ideal $I_{\geq t}$ is the \emph{truncation} of $I$ in degree $t$.

The ideal $J\subseteq \Sk$ is \emph{monomial} if it is generated  by a set of terms. If $J$ is a monomial ideal, a minimal set of terms generating it is unique and we call it \emph{the monomial basis} of $J$ and denote it by $B_J$.

If $I\subset \Sk$ is a homogeneous ideal, we define the \emph{Hilbert function} of $\Sk/I$ as $h_{\Sk/I}(t)=\dim_k(\Sk_t/I_t)$. If we consider a monomial ideal $J$, observe that $h_{\Sk/J}(t)=\vert \mathbb T(\ell)\vert-\vert J_t\cap \mathbb T(\ell)\vert$. For every homogeneous ideal $I\subset \Sk$ there is a nu\-me\-ri\-cal polynomial  $P(z)\in \mathbb Q[z]$ such that for every $t\gg 0$, $h_{\Sk/I}(t)=P(t)$, which is called the \emph{Hilbert polynomial} of $I$.   A polynomial is an \emph{admissible} Hilbert polynomial if it is the Hilbert polynomial of some ideal.

Let $I$ be a homogeneous ideal in $\Sk$.  Consider its graded minimal free resolution
\[
0\rightarrow E_n\rightarrow \cdots \rightarrow E_1\rightarrow E_0\rightarrow I\rightarrow 0,
\]
where $E_i=\oplus_j \Sk(-a_{ij})$.
The ideal $I$ is $m$-regular if $m\geq a_{ij}-i$ for every $i,j$. The \emph{Castelnuovo-Mumford regularity} of $I$, or simply the \emph{regularity} of $I$, denoted by $\reg(I)$, is the smallest $m$ for which $I$ is $m$-regular (see for instance \cite{Eis2}). Using the above notations for Hilbert function and polynomia, we recall that for every $s\geq \reg(I)$, $h_{\Sk/I}(s)=P(s)$.

If $I$ and $J$ are homogenous ideals in $\Sk$, we define $(I:J)$ as the ideal $\left\{ f\in \Sk\,\vert\, fJ\subseteq I \right\}$;  we will write $(I:x_i)$ for $(I:(x_i))$. Further, we define $(I:J^\infty):=\cup_{j\geq 0}(I:J^j)$; again, we will write $(I:x_i^\infty)$ for $(I:(x_i)^\infty)$.

The ideal $I\subset \Sk$ is \emph{saturated} if $I=(I:(x_\ell,\dots,x_n)^\infty)$. The \emph{saturation} of $I$ is $I^\sat=(I:(x_\ell,\dots,x_n)^\infty)$  and $I$ is $m$-saturated if $I_t=I_t^\sat$ for every $t\geq m$.


\section{Quasi-stability and stability}\label{Sec:QS,S,SS}
In the present section we  introduce \emph{quasi-stability} and \emph{stability} of a monomial ideal. Both properties do not depend on the characteristic of the field we work on.  A thorough reference on this subject is \cite{Seiler2009II}.

 As already mentioned, quasi-stable ideals  can be found in literature under several different names, see for instance \cite{BG06,CS05,HPV03}. Here we consider the definition given in \cite{CMR13}, which better fits our purposes.

\begin{definition}\cite[Definition 4.4]{CMR13}\label{def:qstable}
Let $J\subset \Sk$ be a monomial ideal. 
\begin{enumerate}[(i)]
\item\label{qstable_i} $J$ is \emph{quasi-stable} if for every $x^\alpha \in J\cap \mathbb T(\ell)$, for every $x_j>\min(x^\alpha)$, there is $s\geq 0$ such that $\displaystyle\frac{x_j^s x^\alpha}{\min(x^\alpha)}\in J$.
\item $J$ is \emph{stable} if for every  $x^\alpha \in J\cap \mathbb T(\ell)$, for every $x_j>\min(x^\alpha)$ then  $\displaystyle\frac{x_jx^\alpha}{\min(x^\alpha)}\in J$.
\end{enumerate}

\end{definition}

\begin{remark}
In order to establish if $J$ is  quasi-stable or stable 
it is enough to check the conditions of Definition \ref{def:qstable} on the terms  $x^\alpha \in B_J$.
\end{remark}

\begin{example}
We consider the polynomial ring $S^{(0,2)}$ and the monomial ideal $J=(x_1,x_2^2)\subset S^{(0,2)}$. In order to prove that $J$ is quasi-stable, we only need to check that for some $s\geq 0$, $\displaystyle\frac{x_2^s x_1}{\min(x_1)}\in J$. It is sufficient to take $s=2$, hence $J$ is quasi-stable. On the other hand, $J$ is not stable because $\displaystyle\frac{x_2x_1}{\min(x_1)}=x_2\notin J$.
\end{example}

Combining the various definitions and properties appearing in literature, we get that there are several equivalent properties characterizing quasi-stable ideals.

\begin{theorem}\label{thm:BorelEquiv}
Let $J\subset \Sk$ be a monomial ideal, $P(z)$ be the Hilbert polynomial of $J$ and $d$ be the degree of $P(z)$. The following conditions are equivalent:
\begin{enumerate}[(i)]
\item \label{BorelEquiv_i}$J$ is quasi-stable;
\item \label{BorelEquiv_ii}for each term $x^\alpha\in J$ and for all integers $i,j,m$ such that $\ell\leq i<j\leq n$ and $x_i^m$ divides $x^\alpha$, there exists $s\geq 0$ such that $\displaystyle\frac{x_j^s x^\alpha}{x_i^m}\in J$;
\item \label{BorelEquiv_iii}for each term $x^\alpha\in J$ and for all integers $i,j$ such that $\ell\leq i<j\leq n$, there exists $s\geq 0$ such that $\displaystyle\frac{x_j^s x^\alpha}{x_i^{\alpha_i}}\in J$;
\item \label{BorelEquiv_v}$(J:x_i^\infty)=(J:(x_n,\dots,x_i)^\infty),\, \forall i=\ell,\dots,n$;
\item  \label{BorelEquiv_vi}the variable $x_\ell$ is not a zero divisor for $S/J^{\mathrm{sat}}$ and for all $\ell\leq j<d$ the
variable $x_{j+1}$ is not a zero divisor for $S/(J,x_\ell,\dots,x_j)^{\mathrm{sat}}$. 
\end{enumerate}
\end{theorem}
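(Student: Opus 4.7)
The plan is to establish the equivalences in two stages: first show that the three combinatorial statements (i), (ii), (iii) about monomials in $B_J$ are equivalent, and then bridge to the ideal-theoretic conditions (iv) and (v) using the standard characterization of saturation via non-zero-divisors.

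For the combinatorial block, the implications (ii) $\Rightarrow$ (iii) $\Rightarrow$ (i) are immediate: (iii) is the special case $m = \alpha_i$ of (ii), and (i) is (iii) applied with $x_i = \min(x^\alpha)$. The direction (iii) $\Rightarrow$ (ii) follows by multiplying a witness of (iii) by $x_i^{\alpha_i - m}$. The substantive step is (i) $\Rightarrow$ (iii). I would first prove, by induction on $r$ with $1 \le r \le \alpha_{i_0}$ (where $x_{i_0} = \min(x^\alpha)$), that (i) suffices to remove the full $x_{i_0}^{\alpha_{i_0}}$ factor at the price of an appropriate power of $x_j$: the base case $r=1$ is precisely (i), and the inductive step reapplies (i) to the monomial produced at the previous stage, whose minimum is still $x_{i_0}$ as long as $r < \alpha_{i_0}$. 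With this strengthened form in hand, a second induction on the number of indices $k < i$ with $\alpha_k > 0$ eliminates the variables strictly below $x_i$ from $x^\alpha$ one factor at a time, charging only powers of $x_j$; once the minimum has been raised to $x_i$, one more application of the strengthened version of (i) removes $x_i^{\alpha_i}$, yielding (iii).

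Next I would handle (iii) $\Leftrightarrow$ (iv) by direct translation. For (iv) $\Rightarrow$ (iii), given $x^\alpha \in J$ and $i < j$, the monomial $f = x^\alpha / x_i^{\alpha_i}$ witnesses $f \in (J : x_i^\infty)$, hence $f \in (J : (x_n,\dots,x_i)^\infty)$ by (iv), so $f x_j^s \in J$ for some $s$. For (iii) $\Rightarrow$ (iv), the inclusion $(J : (x_n,\dots,x_i)^\infty) \subseteq (J : x_i^\infty)$ is trivial; conversely, a monomial $f$ with $f x_i^N \in J$ plugged into (iii) for each $j \in \{i+1,\dots,n\}$ yields exponents $s_j$ with $f x_j^{s_j} \in J$, and a pigeonhole on the total degree of any $x^\gamma \in (x_i,\dots,x_n)^M$ shows that for $M$ large enough some $\gamma_j$ exceeds the corresponding $s_j$, forcing $f x^\gamma \in J$.

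Finally, for (iv) $\Leftrightarrow$ (v), I would rely on the identity $(I : x^\infty) = (I^\sat : x^\infty)$ together with the containment $I^\sat \subseteq (I : x^\infty)$, which jointly imply that $(I : x^\infty) = I^\sat$ is equivalent to $x$ being a non-zero-divisor on $S/I^\sat$. Applied to $I = J$ and $x = x_\ell$, this translates (iv) at $i = \ell$ into the first clause of (v). The remaining clauses follow by induction on $j$: assuming the equivalent conditions (i)--(iv), one shows that the analogue of (iv) holds for the ideal $(J, x_\ell, \dots, x_j)$ at $i = j+1$, which by the same translation says $x_{j+1}$ is a non-zero-divisor on $S/(J, x_\ell, \dots, x_j)^\sat$; the cutoff $j < d$ is controlled by the degree of the Hilbert polynomial, since beyond that range the relevant saturation becomes the unit ideal and the condition is vacuous. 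The main technical obstacle I anticipate is precisely this last inductive step, where one must carefully track how successive colon and saturation operations interact with the adjoined variables $x_\ell, \dots, x_j$; this will rely on $J$ being monomial so that $(J, x_\ell, \dots, x_j)^\sat$ and its colon ideals admit transparent combinatorial descriptions in terms of $B_J$.
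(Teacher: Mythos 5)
Your proposal is more self-contained than the paper's proof: the paper only proves (i) $\Leftrightarrow$ (ii) directly and cites [HPV03, Prop.~2.2] for (ii) $\Leftrightarrow$ (iii) $\Leftrightarrow$ (iv) and [BG06, Prop.~3.2] for (iv) $\Leftrightarrow$ (v). Your combinatorial block (i) $\Leftrightarrow$ (ii) $\Leftrightarrow$ (iii) is essentially the same iterative ``chase the minimum down one factor at a time'' argument the paper uses for (i) $\Leftrightarrow$ (ii), and your direct bridge (iii) $\Leftrightarrow$ (iv) -- using that $(J:x_i^\infty)$ is a monomial ideal and a pigeonhole on the exponent vector $\gamma$ -- is sound and gives a genuinely self-contained replacement for the citation.

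The genuine gap, which you correctly anticipate, is (iv) $\Leftrightarrow$ (v). Your sketch only addresses (iv) $\Rightarrow$ (v), and even there the central claim ``the analogue of (iv) holds for $(J,x_\ell,\dots,x_j)$ at $i=j+1$'' is asserted, not proved; note that $(J,x_\ell,\dots,x_j)$ is in general \emph{not} quasi-stable (e.g.\ $(x_n,x_\ell)\subset S^{(\ell,n)}$ fails for $n\ge\ell+2$, and indeed fails (iv) at $i=2>j+1$ when $\ell=0,j=0$), so one cannot simply re-run the earlier equivalences for it, and the single instance $i=j+1$ must be derived by a separate argument tracking how the colon operations interact with the adjoined variables. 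The converse (v) $\Rightarrow$ (iv) is not addressed at all, and it is the harder direction: (v) imposes conditions only for indices up to $d$, whereas (iv) is required for every $i=\ell,\dots,n$, and the dimension/Hilbert-polynomial argument needed to show the remaining cases are automatic (your remark that ``the saturation becomes the unit ideal'' points the right way but is not carried out) is nontrivial. The paper sidesteps all of this by citing [BG06, Prop.~3.2].
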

\begin{proof}
The equivalences among items \eqref{BorelEquiv_ii}, \eqref{BorelEquiv_iii} and \eqref{BorelEquiv_v} are proved in  \cite[Proposition 2.2]{HPV03}. The equivalence between items \eqref{BorelEquiv_v} and \eqref{BorelEquiv_vi} is proved in \cite[Proposition 3.2]{BG06}.
Although well-known, we prove here the equivalence between items \eqref{BorelEquiv_i} and  \eqref{BorelEquiv_ii}.

Assume that (\ref{BorelEquiv_ii}) holds and apply it for every $x^\alpha$ in the monomial ideal $J$, with $x_i=\min(x^\alpha)$ and $m=1$. Then $J$ fullfills Definition \ref{def:qstable}.

Conversely, assume that $J$ is quasi-stable. First, we prove that for every $x^\alpha$ in $J$, for every $x_i$ dividing $x^\alpha$, for every $x_j>x_i$ there is $s$ such that $\displaystyle\frac{x_j^s x^\alpha}{x_i}\in J$. 
 If $x_i=\min(x^\alpha)$, by  Definition \ref{def:qstable} it is immediate that there is $s\geq 0$ such that $\displaystyle\frac{x_j^s x^\alpha}{x_i}\in J$. 

Consider then $x_i>\min(x^\alpha)$, define $a:=\sum_{\omega>i}\alpha_\omega$ and consider the following terms, which belong to $J$ by (\ref{BorelEquiv_i}), for some $s_t\geq 0$:
\[x^{\alpha(0)}:=x^\alpha, \quad x^{\alpha(t)}=\frac{x_j^{s_t}x^{\alpha(t-1)}}{\min(x^{\alpha(t-1)})},  \quad t=1,\dots,a.\]
Then $\min(x^{\alpha(a)})=x_i$, and then there is $s\geq 0$ such that $\displaystyle\frac{x_j^s x^{\alpha(a)}}{x_i}$ belongs to $J$. Following backwards the definition of the terms $x^{\alpha(t)}$, we obtain that $\displaystyle\frac{x_j^{\overline s} x^\alpha}{x_i} \in J$, where $\overline s=s+\sum_t s_t\geq 0$.

Iterating the above arguments, we obtain condition (\ref{BorelEquiv_ii}) for every $m\geq 0$ such that $x_i^m$ divides $x^\alpha$.
\end{proof}

\begin{remark}
Products, intersections, sums and quotients of quasi-stable ideals are quasi-stable (see \cite[Lemma 4.6]{Seiler2009II}). In particular, if $J\subset \Sk$ is quasi-stable, then $J_{\geq m}$ is quasi-stable for every $m$.
\end{remark}

\section{Pommaret basis of  a monomial ideal}\label{sec:Pommaret}

We now recall the definition and some properties of the Pommaret basis of  a monomial ideal. 
Several of the following definitions and properties hold in a more general setting, that is for \emph{involutive divisions}. For a deeper insight in this  topic, we refer to \cite{Seiler2009I}, \cite{Seiler2009II} and the references therein. For a set of terms $M\subset \mathbb T(\ell)$, $(M)$ is the ideal generated by $M$ in the polynomial ring $\Sk$.

\begin{definition}\cite{Seiler2009I}
Consider $x^\alpha \in \T$. 
The \emph{Pommaret cone} of $x^\alpha \in \T$ is the set of terms $\mathcal C_\PP(x^\alpha)=\lbrace x^\alpha x^\delta\vert \max(x^\delta)\leq \min(x^\alpha)\rbrace$.
Let $M\subset \mathbb T(\ell)$ be a finite set. The \emph{Pommaret span} of $M$ is 
\begin{equation}\label{eq:PCones}
\left< M\right >_\PP:=\bigcup_{x^\alpha \in M} \mathcal C_\PP(x^\alpha).
\end{equation}

The finite set of terms $M$ is a \emph{weak Pommaret basis} if $\left <M\right>_\PP=(M)$ and it is a $\emph{Pommaret basis}$ if the union on the right hand side of \eqref{eq:PCones} is disjoint. 

Let $\overline M\supseteq M$ be a finite set in $\T$ such that $\left< \overline M\right>_\PP=(M)$. We call $\overline M$ a \emph{Pommaret completion} of $M$. An \emph{obstruction} is any element of $(M)\setminus \left<M\right>_\PP$.
\end{definition}

\begin{remark}\label{rem:varieP}\ 

\begin{enumerate}[(i)]
 \item\label{varieP_i} By definition, a (weak) Pommaret basis $M$ contains a finite number of terms. It is not true that every monomial ideal has such a basis \cite[page 231, after the proof of Proposition 6.8]{Seiler2009I}. For instance, the ideal $J=(x_0x_1) \subset S^{(0,1)}$ does not contain a finite set of terms which is a Pommaret basis  \cite[Remark 4.3 i.]{CMR13}. 
\item\label{varieP_ii} If $M$ is a weak Pommaret basis, it is always possible to find $M'\subseteq M$ such that $M'$ is a Pommaret basis \cite[Proposition 2.8]{Seiler2009I}.
\item \label{varieP_iii}A Pommaret basis  is unique and minimal in the following sense: if $M$ is a Pommaret basis and  $M'\subset \T$ is another finite set such that $(M)=\langle M'\rangle_\PP$ then $M\subseteq M'$ \cite[Proposition 2.8, Proposition 2.11]{Seiler2009I}.
\item \label{varieP_iv} 
If $M$ is a Pommaret basis for the ideal $(M)$, then  $M$ obviously contains the minimal monomial basis of $(M)$.
\end{enumerate}
\end{remark}

As already pointed out, it is not possible to find for every monomial ideal $J$ a \emph{finite} subset of $\T$  which is its Pommaret basis. Nevertheless,  quasi-stable ideals are exactly those having a Pommaret basis.

\begin{theorem}\label{qsPB}\cite[Proposition 4.4]{Seiler2009II}\cite[Remark 2.10]{Mall1998} \ 
Let $J$ be a monomial ideal in $\Sk$.
The ideal $J$ is quasi-stable if and only if  $J$ has a Pommaret basis.
Furthermore, the ideal $J$ is stable if and only if its minimal monomial basis is the Pommaret basis.
\end{theorem}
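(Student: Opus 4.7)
I would prove the two biconditionals separately; the main statement (quasi-stable iff Pommaret basis) carries the bulk of the work, and the stable refinement drops out at the end.

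For ``finite Pommaret basis $\Rightarrow$ quasi-stable'', my strategy is a pigeonhole argument. Fix a finite Pommaret basis $H$ of $J$, take $x^\alpha\in J$, and let $x_j>\min(x^\alpha)=:x_i$. For every $s\geq 0$ the term $x_j^s x^\alpha$ lies in a unique cone $\mathcal C_\PP(h_s)$ with $h_s\in H$, by disjointness of a Pommaret basis. Finiteness of $H$ supplies $s_1<s_2$ with $h_{s_1}=h_{s_2}=:h$; writing $x_j^{s_i}x^\alpha=h\,u_i$ with $\max(u_i)\leq \min(h)$, the relation $u_2=x_j^{s_2-s_1}u_1$ forces $x_j\leq\min(h)$, so $x_i<\min(h)$. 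Since $\min(x_j^{s_1}x^\alpha)=x_i$ but $x_i$ cannot appear in $h$, it must divide $u_1$; therefore $x_j^{s_1}x^\alpha/x_i=h\,(u_1/x_i)\in\mathcal C_\PP(h)\subseteq J$, which is exactly condition (\ref{qstable_i}) of Definition \ref{def:qstable}.

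For ``quasi-stable $\Rightarrow$ finite Pommaret basis'', the plan is a completion procedure starting from $H_0:=B_J$. At stage $k$, while some $h\in H_k$ and some $x_j>\min(h)$ satisfy $x_jh\notin\left<H_k\right>_\PP$, use quasi-stability to find the minimal $s\geq 1$ with $x_j^s h/\min(h)\in J$ and adjoin to $H_k$ the intermediate terms $x_jh,\ldots,x_j^{s-1}h$ needed to cover the Pommaret gap. The crux is termination, and this is where I expect the main obstacle. I would control it via a well-founded invariant: each newly adjoined term has minimum variable strictly greater than the minimum variable of the generator that produced it, and since minimum variables are bounded above by $x_n$ while Noetherianity of $\Sk$ limits the additions at each minimum-variable level, the procedure halts. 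A cleaner but less elementary alternative is induction on $n-\ell$ via characterization (\ref{BorelEquiv_vi}) of Theorem \ref{thm:BorelEquiv}: non-zero-divisibility of $x_\ell$ on $\Sk/J^{\sat}$ reduces the construction to the quasi-stable ideal $(J,x_\ell)$ in one fewer variable, whose Pommaret basis lifts back.

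For the ``furthermore'', once quasi-stability is established $J$ has a unique Pommaret basis $H\supseteq B_J$ (Remark \ref{rem:varieP}(\ref{varieP_iv})). If $B_J=H$, then for $x^\beta\in B_J$ and $x_j>\min(x^\beta)=x_i$ the decomposition $x_jx^\beta=h\,u$ with $h\in B_J$, $\max(u)\leq\min(h)$ must have $u\neq 1$ (else $h=x_jx^\beta$ would contradict minimality of $B_J$) and $\min(h)>x_i$, forcing $x_i\mid u$; hence $x_jx^\beta/x_i=h\,(u/x_i)\in J$, proving stability. Conversely, if $J$ is stable, I would show every term in $J$ has a unique Pommaret parent in $B_J$: pick a divisor $h\in B_J$ of $x^\alpha$ that minimizes the total degree of $x^\alpha/h$ in variables exceeding $\min(h)$; if this invariant is positive, stability applied at $x_j:=\max(x^\alpha/h)>\min(h)$ produces a new divisor with strictly smaller invariant, a contradiction. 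Hence $B_J$ covers $J$ by Pommaret cones, and disjointness is forced by minimality of $B_J$, so $B_J$ is itself the Pommaret basis.
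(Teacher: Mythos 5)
Your pigeonhole argument for ``finite Pommaret basis $\Rightarrow$ quasi-stable'' is correct and self-contained; note the paper itself does not prove Theorem~\ref{qsPB} but cites \cite{Seiler2009II} and \cite{Mall1998}, so there is no internal argument to compare against. The converse, however, has a genuine gap: you propose to control termination of the completion via the invariant ``each newly adjoined term has minimum variable strictly greater than the minimum variable of the generator that produced it,'' but the terms you adjoin have the form $x_j h$ with $x_j > \min(h)$, and precisely because $x_j > \min(h)$ the exponent of $\min(h)$ in $x_j h$ is unchanged, so $\min(x_j h) = \min(h)$. Already for $J=(x_1,x_2^2)\subset S^{(0,2)}$, the completion adjoins $x_1x_2$ to cover the obstruction of $x_1$, and $\min(x_1x_2)=x_1=\min(x_1)$: the invariant does not move at all. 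Termination of Pommaret completion for quasi-stable ideals is a nontrivial theorem (the paper cites Gerdt--Blinkov \cite{GB} for Algorithm~\ref{alg:completionP} and does not reprove it), and your alternative sketch by induction on $n-\ell$ via Theorem~\ref{thm:BorelEquiv}\eqref{BorelEquiv_vi} leaves the crucial ``lifts back'' step unargued, so the gap remains open.

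The ``furthermore'' has two smaller but real defects. In the direction $B_J=\PP(J)\Rightarrow$~stable, you assert that the decomposition $x_j x^\beta=hu$ with $h\in B_J$, $\max(u)\leq\min(h)$ must have $\min(h)>x_i$; this is not forced. Take $J=(x_2^2,x_1x_2,x_1^2)$ (stable, with $B_J$ its Pommaret basis), $x^\beta=x_1^2$, $x_j=x_2$: then $x_jx^\beta=x_1^2x_2=(x_1x_2)\cdot x_1$, so $h=x_1x_2$ and $\min(h)=x_1=x_i$. The desired conclusion still holds---in that case $u$ is a power of $x_i$, whence $x_jx^\beta/x_i=h\,x_i^{m-1}\in J$---but the claimed forcing is false; a cleaner route here is Lemma~\ref{potenze}\eqref{potenze_i} applied to $x_jx^\beta\notin B_J$. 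In the converse direction, the stability move produces $x_jh/\min(h)\in J$, which in general is not in $B_J$; your invariant is minimized only over divisors in $B_J$, and it is not shown that passing from $x_jh/\min(h)$ to one of its $B_J$-divisors does not re-increase the invariant, so the descent is not established.
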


If $M$ is a finite set of terms generating a quasi-stable ideal, then it is possible to compute a Pommaret completion $\overline M$ algorithmically: it is enough to add to the set $M$ the obstructions of kind $x^\alpha x_i$ with $x^\alpha \in M$ and $x_i>\min(x^\alpha)$, obtaining in this way $\overline M$ \cite[Algorithm 2]{Seiler2009I}.

\begin{algorithm}[h]
\caption{\label{alg:completionP} \textsc{Completion}(M)}
\begin{algorithmic}[1]
\REQUIRE $M\subset \T$ finite set of terms generating a quasi-stable ideal
\ENSURE $\overline M$ Pommaret completion of $M$
\STATE $\overline M \leftarrow M$;
\LOOP
\STATE $F\leftarrow \lbrace x^\alpha x_j:x^\alpha\in \overline M,x_j>\min(x^\alpha), x^\alpha x_j \notin \left<\overline M\right >_\PP\rbrace$;
\IF{$F=\emptyset$}
\RETURN $\overline M$;
\ELSE
\STATE choose $x^\beta\in F$ such that no other term in $F$ divides it;
\STATE $\overline M\leftarrow \overline M\cup \lbrace x^\beta\rbrace$;
\ENDIF
\ENDLOOP
\end{algorithmic}
\end{algorithm}

The termination of Algorithm \ref{alg:completionP}  is proved in \cite{GB}. Observe that the output of Algorithm \ref{alg:completionP}  is in general a weak Pommaret basis, from which we can always obtain a Pommaret basis(see Remark \ref{rem:varieP}, (\ref{varieP_ii})).

\medskip
We now state several properties of Pommaret bases and of the ideals they generate. If no proof is given, we give a precise reference for the interested reader. If $J$ is a quasi-stable ideal, we denote its Pommaret basis by $\mathcal P(J)$.

\begin{definition}
Let  $J$ be a quasi-stable ideal in $\Sk$  and let  $\PP(J)$ be its  Pommaret basis. We define the following sets of monomials
\[
\PP(J)(j):=\lbrace x^\alpha\in \PP(J)\vert \min(x^\alpha)=j \rbrace;
\]
\[
\overline{\PP(J)(j)}:=\left\lbrace \displaystyle\frac{x^\alpha}{x_j^{\alpha_j}}\vert x^\alpha \in \PP(J)(j) \right\rbrace.
\]
\end{definition}

\begin{lemma}\label{thm:PBSat}
Let  $J$ be a quasi-stable ideal in $\Sk$  with Pommaret basis  $\PP(J)$ and consider $\ell\leq j\leq n$.
\begin{enumerate}[(i)]
\item The ideal  $(J:(x_n,\dots, x_{j})^\infty)$ has weak Pommaret basis $$\overline{\PP(J)(j)}\cup \bigcup_{i=j+1}^n \PP(J)(i);$$
\item no term in the Pommaret basis of  $(J:(x_n,\dots, x_{j})^\infty)$  is divisible by $x_m$ with $m\leq j$;
\item\label{PBSat_iii} if $J$ is a saturated ideal, then no term in $\PP(J)$ is divisible by $x_\ell$.
\end{enumerate}
\end{lemma}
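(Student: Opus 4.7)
The plan is to prove (i) carefully and derive (ii) and (iii) as quick corollaries. For (i), the first move is to invoke Theorem \ref{thm:BorelEquiv}(\ref{BorelEquiv_v}) to identify $(J:(x_n,\dots,x_j)^\infty)$ with the more tractable $(J:x_j^\infty)$. Set $M:=\overline{\PP(J)(j)}\cup\bigcup_{i>j}\PP(J)(i)$; the target is $\langle M\rangle_\PP=(J:x_j^\infty)$. The inclusion $\langle M\rangle_\PP\subseteq(J:x_j^\infty)$ is straightforward: each $x^\alpha\in\PP(J)(i)$ with $i>j$ already lies in $J$, and for $y=x^\alpha/x_j^{\alpha_j}$ with $x^\alpha\in\PP(J)(j)$, any $z\in\mathcal C_\PP(y)$ satisfies $x_j^{\alpha_j}z\in J$, so the whole cone sits inside $(J:x_j^\infty)$.

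The main work is the reverse inclusion. Given $x^\gamma\in(J:x_j^\infty)$, I would pick the smallest $s\geq 0$ with $x_j^sx^\gamma\in J$ and the unique $x^\alpha\in\PP(J)$ whose Pommaret cone contains $x_j^sx^\gamma$, writing $x_j^sx^\gamma=x^\alpha x^\delta$ with $\max(x^\delta)\leq\min(x^\alpha)$. I then split on $\min(x^\alpha)$: if $\min(x^\alpha)>j$ then $\alpha_j=0$, the $x_j^s$ factor sits inside $x^\delta$ and cancels, yielding $x^\gamma\in\mathcal C_\PP(x^\alpha)\subseteq\langle M\rangle_\PP$; if $\min(x^\alpha)=j$, a direct exponent check gives $x^\gamma\in\mathcal C_\PP(x^\alpha/x_j^{\alpha_j})\subseteq\langle M\rangle_\PP$. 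The delicate case---and the main obstacle of the whole proof---is $\min(x^\alpha)=m<j$, in which $x^\alpha$ itself is not in $M$. Here I would use Theorem \ref{thm:BorelEquiv}(\ref{BorelEquiv_iii}) iteratively, replacing each variable $x_{m_i}<x_j$ present in the currently lifted term by a suitable power of $x_j$; the end result is some $x^{\alpha'}\in J$ with $\alpha'_k=\alpha_k$ for $k>j$ and $\alpha'_k=0$ for $k<j$. This $x^{\alpha'}$ lies in a unique cone $\mathcal C_\PP(x^\beta)$ with $x^\beta\in\PP(J)$, and because the support of $x^{\alpha'}$ is contained in $\{x_j,\dots,x_n\}$ we must have $\min(x^\beta)\geq j$. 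A bookkeeping argument on $x_k$-exponents for $k>j$ (which are untouched both by the original decomposition---since $x^\delta$ only involves variables $\leq m<j$---and by the lifting) then shows that $x^\gamma\in\mathcal C_\PP(x^\beta)$ when $\min(x^\beta)>j$, and $x^\gamma\in\mathcal C_\PP(x^\beta/x_j^{\beta_j})$ when $\min(x^\beta)=j$. In either case we have exhibited an element of $M$ whose Pommaret cone contains $x^\gamma$.

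Once (i) is in hand, (ii) is immediate by inspection of $M$: every term in $\overline{\PP(J)(j)}$ has had all its $x_j$-factors removed and is supported on $\{x_{j+1},\dots,x_n\}$, while every term in $\PP(J)(i)$ with $i>j$ already has minimum variable $x_i>x_j$, so no element of $M$ is divisible by $x_m$ with $m\leq j$. By Remark \ref{rem:varieP}(\ref{varieP_ii}) the strong Pommaret basis sits inside the weak one $M$, so it inherits the property. Finally, (iii) is (ii) applied at $j=\ell$: when $J$ is saturated, Theorem \ref{thm:BorelEquiv}(\ref{BorelEquiv_v}) gives $J=(J:(x_n,\dots,x_\ell)^\infty)$, so $\PP(J)$ is itself the Pommaret basis of this saturation, and (ii) forbids any of its terms from being divisible by $x_\ell$.
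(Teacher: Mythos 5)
Your proof is correct. The paper's own argument is a two-line citation: it identifies $(J:(x_n,\dots,x_j)^\infty)$ with $(J:x_j^\infty)$ via Theorem \ref{thm:BorelEquiv}(\ref{BorelEquiv_v}), exactly as you do, and then appeals to \cite[Lemma 4.11]{Seiler2009II}, which is precisely the content of part (i). What you do differently is re-prove Seiler's lemma from scratch rather than cite it. Your case split on $\min(x^\alpha)$ is the right one, and the delicate third case ($\min(x^\alpha)<j$) closes because of the observation you make: the exponents $\gamma_k$ for $k>j$ are preserved both under the decomposition $x_j^s x^\gamma=x^\alpha x^\delta$ (since $\max(x^\delta)\leq\min(x^\alpha)<j$ and $x_j^s$ only changes the $j$-th exponent) and under the iterated application of Theorem \ref{thm:BorelEquiv}(\ref{BorelEquiv_iii}) (which only alters exponents of variables $\leq j$). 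Thus once $x^{\alpha'}\in\mathcal C_\PP(x^\beta)$ with $\min(x^\beta)\geq j$, the exponent comparison for $k>j$ transfers directly from $x^{\alpha'}$ to $x^\gamma$, and for $k\leq j$ membership in the cone of $x^\beta$ (or of $x^\beta/x_j^{\beta_j}$) is automatic since those exponents of $x^\beta$ vanish. Parts (ii) and (iii) follow from (i) together with Remark \ref{rem:varieP}(\ref{varieP_ii}) exactly as you say. The trade-off: the paper's proof is shorter but opaque without Seiler's paper in hand; yours is longer but self-contained and makes the underlying combinatorics visible, which is arguably more informative for a reader.
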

\begin{proof}
It is sufficient to consider the equivalent property (\ref{BorelEquiv_v}) of Theorem \ref{thm:BorelEquiv} and use \cite[Lemma 4.11]{Seiler2009II}.
\end{proof}

\begin{lemma}\label{lem:1morev}
Let  $M\subset \T$, $\ell\geq 1$, be the Pommaret basis of the ideal $(M)\subset \Sk$. Then $M$ is also the Pommaret basis for the ideal $(M)S^{(\ell-1,n)}$.
\end{lemma}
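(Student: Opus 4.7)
The plan is to unwind the definitions and show that the two conditions characterizing a Pommaret basis — namely $\langle M\rangle_{\mathcal P} = (M)$ and the disjointness of the union of Pommaret cones — both transfer from $S^{(\ell,n)}$ to $S^{(\ell-1,n)}$. The only subtlety is that the ambient set of multipliers changes: a Pommaret cone $\mathcal C_{\mathcal P}(x^\alpha)$ in $S^{(\ell-1,n)}$ allows multipliers $x^\delta \in \mathbb T(\ell-1)$ with $\max(x^\delta)\leq \min(x^\alpha)$, and since every $x^\alpha \in M \subset \mathbb T(\ell)$ satisfies $\min(x^\alpha)\geq x_\ell > x_{\ell-1}$, the new variable $x_{\ell-1}$ is a legal multiplier. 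So the cones get larger, but in a controlled way.

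For the covering property, I would pick an arbitrary generator $x^\alpha x^\delta$ of $(M)S^{(\ell-1,n)}$ with $x^\alpha \in M$ and $x^\delta \in \mathbb T(\ell-1)$, and factor $x^\delta = x_{\ell-1}^{a} y^\delta$ with $y^\delta \in \mathbb T(\ell)$. Since $M$ is already a Pommaret basis in $S^{(\ell,n)}$, the term $x^\alpha y^\delta$ lies in $\mathcal C_{\mathcal P}(x^\beta)$ for some $x^\beta \in M$, i.e.\ $x^\alpha y^\delta = x^\beta x^\gamma$ with $x^\gamma \in \mathbb T(\ell)$ and $\max(x^\gamma)\leq \min(x^\beta)$. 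Multiplying by $x_{\ell-1}^{a}$ gives $x^\alpha x^\delta = x^\beta (x^\gamma x_{\ell-1}^{a})$, and since $x_{\ell-1} < x_\ell \leq \min(x^\beta)$ we still have $\max(x^\gamma x_{\ell-1}^{a})\leq \min(x^\beta)$, so the product lies in the Pommaret cone of $x^\beta$ computed in $S^{(\ell-1,n)}$.

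For disjointness, suppose $x^{\beta_1} x^{\gamma_1} = x^{\beta_2} x^{\gamma_2}$ with $x^{\beta_i}\in M$ and $x^{\gamma_i}\in \mathbb T(\ell-1)$ admissible in the Pommaret sense. Because no $x^{\beta_i}$ involves $x_{\ell-1}$, comparing the $x_{\ell-1}$-exponents on both sides forces the $x_{\ell-1}$-parts of $x^{\gamma_1}$ and $x^{\gamma_2}$ to agree. Cancelling them reduces to an equality in $S^{(\ell,n)}$ between two products from the original Pommaret decomposition of $(M)$, which, being disjoint, forces $x^{\beta_1} = x^{\beta_2}$.

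I do not expect a real obstacle here: the statement is essentially a compatibility between the definition of Pommaret cone and the enlargement of the polynomial ring by a variable smaller than any variable appearing in $M$. The one thing to double-check is that no element of $M$ has $x_{\ell-1}$ in it (true by hypothesis $M\subset \mathbb T(\ell)$), which is exactly what allows both the enlargement of the cones and the separation of the $x_{\ell-1}$-exponent in the disjointness argument.
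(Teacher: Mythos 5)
Your proof is correct and follows essentially the same strategy as the paper: strip off the $x_{\ell-1}$-part of a term, use the Pommaret basis property of $M$ in $S^{(\ell,n)}$, and multiply $x_{\ell-1}^a$ back in, exploiting that $x_{\ell-1}$ is smaller than every variable appearing in $M$. In fact you go slightly further than the paper, which only verifies the covering (weak Pommaret basis) condition, whereas you also spell out the disjointness of the cones.
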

\begin{proof}
We denote by $\mathcal M$ the ideal generated by $M$ in $S^{(\ell-1,n)}$. Let $x^\alpha$ be a term in $\T$, we denote by $\mathcal C_\PP(x^\alpha)$ the Pommaret cone of $x^\alpha$ in $\Sk$ and by $\mathcal C_\PP^{(\ell-1)}(x^\alpha)$ the Pommaret  cone of $x^\alpha$ in $S^{(\ell-1,n)}$. It is immediate that $\mathcal C_\PP(x^\alpha)\subset \mathcal C_\PP^{(\ell-1)}(x^\alpha)$.

We prove that for every $x^\beta \in \mathcal M$, there is $x^\alpha \in M$ such that $x^\beta\in \mathcal C_\PP^{(\ell-1)}(x^\alpha)$.
If $\min(x^\beta)\geq x_\ell$, then $x^\beta$ is a term in $(M)\subset \Sk$, hence there is $x^\alpha \in M$ such that $x^\beta\in \mathcal C_\PP(x^\alpha)\subset \mathcal C_\PP^{(\ell-1)}(x^\alpha)$.
 Otherwise, we have that $\min(x^\beta)=x_{\ell-1}$. Since no term in $M$ is divisible by $x_{\ell-1}$, $\displaystyle\frac{x^\beta}{x_{\ell-1}^{\beta_{\ell-1}}}$ still belongs to $\mathcal M$ and we are in the previous case: there is $x^\alpha \in M$ such that  $\displaystyle\frac{x^\beta}{x_{\ell-1}^{\beta_{\ell-1}}}\in \mathcal C_\PP(x^\alpha)$ for some $x^\alpha \in M$. More precisely, we can write $\displaystyle\frac{x^\beta}{x_{\ell-1}^{\beta_{\ell-1}}}=x^\alpha x^\delta$ with $\max(x^\delta)\leq \min(x^\alpha)$. But then $x^\beta=x^\alpha(x^\delta x_{\ell-1}^{\beta_{\ell-1}})\in \mathcal C_\PP^{(\ell-1)}(x^\alpha)$.
\end{proof}

\begin{lemma}\label{potenze}  
Let  $J$ be a quasi-stable ideal in $\Sk$  and let  $\PP(J)$ be its  Pommaret basis. Then:
\begin{enumerate}[(i)]
\item \label{potenze_i}$x^\alpha \in J\setminus \PP(J)\ \Rightarrow\ \dfrac{x^\alpha}{\min(x^\alpha)} \in J$;  
\item \label{potenze_ii}$x^\alpha \notin J$ and  $x_ix^\alpha \in J \ \Rightarrow$ either $x_ix^\alpha \in \PP(J)$ or $x_i > \min(x^\alpha)$.
\end{enumerate}
\end{lemma}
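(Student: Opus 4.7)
The plan is to exploit the defining decomposition of the ideal into disjoint Pommaret cones provided by the Pommaret basis, together with the cone-compatibility condition $\max(x^\delta)\le\min(x^\beta)$.

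For part \eqref{potenze_i}, I would begin by using that $x^\alpha\in J=\langle\PP(J)\rangle_\PP$ to write $x^\alpha=x^\beta x^\delta$ for some $x^\beta\in\PP(J)$ with $\max(x^\delta)\le\min(x^\beta)$. Because $x^\alpha\notin\PP(J)$, the factor $x^\delta$ must be nontrivial. The key observation is then that $\min(x^\alpha)=\min(x^\delta)$: every variable appearing in $x^\delta$ is at most $\min(x^\beta)$, hence at most every variable of $x^\beta$, so the minimum variable of $x^\alpha$ must come from $x^\delta$. Consequently $\min(x^\alpha)$ divides $x^\delta$, and
\[
\frac{x^\alpha}{\min(x^\alpha)}=x^\beta\cdot\frac{x^\delta}{\min(x^\alpha)}.
\]
The remaining factor $x^\delta/\min(x^\alpha)$ still satisfies $\max\le\max(x^\delta)\le\min(x^\beta)$, so the quotient still lies in $\mathcal C_\PP(x^\beta)\subseteq J$.

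For part \eqref{potenze_ii}, I would argue by contradiction using \eqref{potenze_i} as a black box. Assume $x_ix^\alpha\in J$, $x^\alpha\notin J$, $x_ix^\alpha\notin\PP(J)$, and $x_i\le\min(x^\alpha)$; the goal is to derive an absurdity. Since $x_i\le\min(x^\alpha)$ one has $\min(x_ix^\alpha)=x_i$. Applying \eqref{potenze_i} to $x_ix^\alpha\in J\setminus\PP(J)$ yields
\[
\frac{x_ix^\alpha}{\min(x_ix^\alpha)}=\frac{x_ix^\alpha}{x_i}=x^\alpha\in J,
\]
contradicting the hypothesis $x^\alpha\notin J$. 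Hence either $x_ix^\alpha\in\PP(J)$ or $x_i>\min(x^\alpha)$.

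I do not anticipate a genuine obstacle here; the only point requiring a little care is the identification $\min(x^\alpha)=\min(x^\delta)$ in part \eqref{potenze_i}, which needs the cone condition $\max(x^\delta)\le\min(x^\beta)$ rather than merely being a divisibility relation. Once that is observed, part \eqref{potenze_ii} becomes a one-line consequence of part \eqref{potenze_i}.
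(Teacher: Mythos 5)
Your proof is correct and follows essentially the same route as the paper: part \eqref{potenze_i} rests on writing $x^\alpha=x^\beta x^\delta$ with $x^\beta\in\PP(J)$ and on the observation that the cone condition $\max(x^\delta)\le\min(x^\beta)$ forces $\min(x^\alpha)=\min(x^\delta)$, so dividing by $\min(x^\alpha)$ stays in $\mathcal C_\PP(x^\beta)$; and part \eqref{potenze_ii} is derived from \eqref{potenze_i} by noting that $x_i\le\min(x^\alpha)$ would make $\min(x_ix^\alpha)=x_i$ and force $x^\alpha\in J$. The only cosmetic difference is that you phrase \eqref{potenze_ii} as an explicit contradiction where the paper states it directly, which is a matter of exposition, not substance.
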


\begin{proof}\ 
\begin{enumerate}[(i)]
\item We consider $x^\alpha\in J\setminus \PP(J)$. Then $x^\alpha\in \mathcal C_\PP(x^\beta)$ with $x^\beta \in \PP(J)$: there is $x^\delta$ with $\vert\delta\vert\geq 1$ and $\max(x^\delta)\leq \min(x^\beta)$ such that $x^\alpha=x^\beta\cdot x^\delta$. Hence $\min(x^\alpha)=\min(x^\delta)$ and $\displaystyle\frac{x^\alpha}{\min(x^\alpha)}=x^\beta\cdot \left(\displaystyle\frac{x^\delta}{\min(x^\delta)}\right)$ still belongs to $\mathcal C_\PP(x^\beta)\subset J$.
\item If $x_ix^\alpha\in J\setminus \PP(J)$, then item (\ref{potenze_i}) applies and we obtain $x_i>\min(x^\alpha)$.
\end{enumerate}
\end{proof}

\begin{proposition}\cite[Lemma 2.2, Lemma 2.3, Theorem 9.2, Proposition 9.6]{Seiler2009II}\label{prop:varieS}\ 

\begin{enumerate}[(i)]
\item \label{lem:PBTronc2}
Let $M\subset \T$ be a finite set of terms of degree $s$. If for every $x^\alpha \in M$, for every $x_j>\min(x^\alpha)$, $\displaystyle\frac{x_jx^\alpha}{\min(x^\alpha)}\in M$, then $M$ is a Pommaret basis.
\item \label{lem:reg}Let  $J$ be a quasi-stable ideal in $\Sk$  and let  $\PP(J)$ be its  Pommaret basis. Then the regularity of $J$ is $$\max\lbrace \deg(x^\beta)\vert x^\beta \in \PP(J)\rbrace.$$
\item\label{prop:varieS_iii} Let $J\subset \Sk$ be a quasi-stable ideal generated in degrees less than or equal to $s$. The ideal $J$ is $s$-regular if and only if $J_{\geq s}$ is stable.
\item \label{prop:varieS_iv} Let  $J$ be a quasi-stable ideal in $\Sk$  and consider $s\geq \reg(J)$. Then $J_{\geq s}$ is stable and the set of terms $J_s\cap \T$ is its Pommaret basis.
\end{enumerate}
\end{proposition}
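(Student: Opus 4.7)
The plan is to prove the four items in the stated order, since (ii)--(iv) all rest on (i) combined with the basic Pommaret-cone machinery from Section \ref{sec:Pommaret}.

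For (i), I would show directly that the cones partition: $(M)=\bigsqcup_{x^\alpha\in M}\mathcal C_\PP(x^\alpha)$. Covering is proved by induction on degree: for $x^\gamma\in(M)_t$ with $t>s$, write $x^\gamma=x_i\cdot x^{\gamma'}$ and apply the inductive hypothesis to get $x^{\gamma'}=x^\alpha x^\delta$ with $x^\alpha\in M$ and $\max(x^\delta)\leq\min(x^\alpha)$. If $x_i\leq\min(x^\alpha)$ the multiplication stays in $\mathcal C_\PP(x^\alpha)$; if $x_i>\min(x^\alpha)$ the stability-like hypothesis gives $x^{\alpha'}:=\frac{x_ix^\alpha}{\min(x^\alpha)}\in M$ with $\min(x^{\alpha'})\geq\min(x^\alpha)$, and the identity $x^\gamma=x^{\alpha'}\cdot\min(x^\alpha)\cdot x^\delta$ places $x^\gamma$ in $\mathcal C_\PP(x^{\alpha'})$. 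Disjointness follows by matching multidegrees: for a decomposition $x^\gamma=x^\alpha x^\delta$ with $\max(x^\delta)\leq\min(x^\alpha)$, the exponents $\alpha_k=\gamma_k$ are forced for $k>\min(x^\alpha)$, and the uniform degree constraint $|\alpha|=s$ then pins down both the exponent at $\min(x^\alpha)$ and the position of $\min(x^\alpha)$ itself. The hardest step here is ruling out two compatible decompositions with different minima; I expect to dispatch it by showing that applying the stability-like swap repeatedly to the lower-minimum candidate produces the higher-minimum one, so that only one of them can sit in $M$ given the uniform degree.

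For (ii), I would invoke the Pommaret-adapted free resolution associated with $\PP(J)$ (the involutive resolution at the heart of Seiler's theory): its $i$-th term has one direct summand for each pair $(x^\beta,T)$ with $x^\beta\in\PP(J)$ and $T$ an $i$-subset of $\{x_m:x_m>\min(x^\beta)\}$, shifted by $\deg(x^\beta)+i$. Reading off the shifts yields $\reg(J)\leq\max\{\deg(x^\beta):x^\beta\in\PP(J)\}$. The reverse inequality is immediate: by Remark \ref{rem:varieP}(\ref{varieP_iv}), $\PP(J)\supseteq B_J$, and the top-degree element of $\PP(J)$ contributes a non-cancellable Betti number realising the bound. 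For (iii), the forward direction uses (ii): $\reg(J)\leq s$ forces $\deg(x^\beta)\leq s$ for every $x^\beta\in\PP(J)$. For $x^\alpha\in J_{\geq s}$ and $x_j>\min(x^\alpha)$, Lemma \ref{potenze}(\ref{potenze_i}) applied to $x_jx^\alpha\in J$ gives either $x_jx^\alpha\in\PP(J)$ (excluded since it would force $\deg(x^\alpha)<s$, contradicting $x^\alpha\in J_{\geq s}$) or $\frac{x_jx^\alpha}{\min(x^\alpha)}\in J$, which then lies in $J_{\geq s}$; this is exactly stability of $J_{\geq s}$. Conversely, if $J_{\geq s}$ is stable, Theorem \ref{qsPB} identifies $J_s\cap\T$ as its Pommaret basis, so by (ii) $\reg(J_{\geq s})=s$, and the standard truncation identity $\reg(J)\leq\max(\reg(J_{\geq s}),s)=s$ closes the argument.

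Finally, (iv) follows by combining (iii) and (i): $s\geq\reg(J)$ makes $J_{\geq s}$ stable by (iii), so the set $J_s\cap\T$ satisfies the hypothesis of (i) at degree $s$ and is therefore the Pommaret basis of $J_{\geq s}$. The overall main obstacle is the involutive resolution underpinning part (ii); once one has it, the remaining items cascade cleanly from the Pommaret-cone bookkeeping of part (i) and the structural lemmas already in the paper.
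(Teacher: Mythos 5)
The paper does not actually prove this proposition; all four items are cited verbatim from Seiler, so there is no internal argument to compare against, and I will assess your sketch on its own merits. Your treatment of (i), (iii), (iv) is essentially sound. For (i) the covering induction is correct, and disjointness does follow from the degree count in your first sentence, not from the swap argument in your second, which I do not see working: if $x^\gamma=x^\alpha x^\delta=x^{\alpha'}x^{\delta'}$ with $|\alpha|=|\alpha'|=s$ and $a:=\min(\alpha)<a':=\min(\alpha')$, then $\gamma_k=\alpha_k$ for $k>a$ and $\gamma_k=\alpha'_k$ for $k>a'$, and equating $|\alpha|=|\alpha'|$ gives $\alpha_a+\sum_{a<k\le a'}\gamma_k=\alpha'_{a'}\le\gamma_{a'}$, forcing $\alpha_a\le 0$, absurd. (Also note that (i) drops out of Theorem~\ref{qsPB} in one line: under the hypothesis, $(M)$ is stable and $M$ is its minimal monomial basis, hence its Pommaret basis.)

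Part (ii) has a genuine gap in the lower bound $\reg(J)\ge\max\deg\PP(J)$. You assert that the top-degree element of $\PP(J)$ ``contributes a non-cancellable Betti number,'' but the involutive resolution is not minimal in general and the top-degree element of a Pommaret basis need not be a minimal generator. Concretely, $J=(x_3,x_2^2,x_1^3)\subset S^{(0,3)}$ is quasi-stable with $\PP(J)=\{x_3,\,x_2^2,\,x_1^3,\,x_1^3x_2\}$; the maximum degree $4$ is attained only by $x_1^3x_2$, which is \emph{not} a minimal generator and contributes nothing to $\beta_{0,4}$. Here $\reg(J)=4$ does hold --- the minimal resolution is the Koszul complex on the regular sequence $x_3,x_2^2,x_1^3$ --- but the regularity is witnessed by first- and second-syzygy shifts, not by any degree-$4$ generator, so the mechanism you describe is simply wrong. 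You need a separate argument for the lower bound; one that works is: if $m:=\reg(J)<q:=\max\deg\PP(J)$, then $J$ is generated in degrees $\le m$ and $J_{\ge m}$ is stable with Pommaret basis $J_m\cap\T$, so the degree-$q$ term $x^\beta\in\PP(J)$ factors as $x^\mu x^\epsilon$ with $x^\mu\in J_m\cap\T$, $\max(x^\epsilon)\le\min(x^\mu)$ and $|\epsilon|\ge1$, whence $x^\beta/\min(x^\beta)=x^\mu\bigl(x^\epsilon/\min(x^\epsilon)\bigr)\in J$ --- and $x^\beta/\min(x^\beta)\in J$ is incompatible with $x^\beta\in\PP(J)$, since it would place $x^\beta$ in the Pommaret cone of a lower-degree basis element, violating disjointness. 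That route, however, invokes (iii)/(iv), so your linear ordering (ii)$\Rightarrow$(iii)$\Rightarrow$(iv) would have to be reorganised, or the lower bound of (ii) proved self-containedly as Seiler does.
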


We conclude this section giving a  characterization  of elements of the Pommaret basis of a monomial ideal which can be removed, still having a Pommaret basis.

\begin{definition}\label{def:StMin}
Let $J\subset \Sk$ be  a stable monomial ideal. The term $x^\alpha \in J $ is \emph{St-minimal} if $x_jx^\beta/\min(x^\beta)\neq x^\alpha$, for every $x^\beta \in J$ and for every $x_j>\min(x^\beta).$
\end{definition}

\begin{corollary}\label{PBMin}
Let $J$ be a quasi-stable  \  ideal, consider  $s\geq \reg(J)$ and $x^\alpha \in J_s\cap \T=\PP(J_{\geq s})$. The ideal generated by the set $( J_s\cap \T) \setminus\lbrace x^\alpha\rbrace$ is   stable   if and only if $x^\alpha$ is St-minimal for $J_{\geq s}$. 
\end{corollary}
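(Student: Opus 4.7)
The plan is to invoke Proposition \ref{prop:varieS}(\ref{prop:varieS_iv}), which guarantees that $J_{\geq s}$ is stable and has Pommaret basis $\PP(J_{\geq s}) = J_s\cap \T$. Setting $M := (J_s\cap \T)\setminus \lbrace x^\alpha\rbrace$, I would first observe that all elements of $M$ have the same degree $s$, so none divides another; consequently $M$ is the minimal monomial basis of the ideal $(M)$, and in particular $x^\alpha\notin (M)$.

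Next, I would apply the remark following Definition \ref{def:qstable} to check stability of $(M)$ only on its minimal basis $M$: the ideal $(M)$ is stable if and only if for every $x^\beta\in M$ and every $x_j>\min(x^\beta)$, the term $x_j x^\beta/\min(x^\beta)$ lies in $(M)$. Since $x^\beta\in J_{\geq s}$ and $J_{\geq s}$ is stable, this term lies in $J_{\geq s}$; being of degree $s$, it belongs to $J_s\cap \T = M\cup\lbrace x^\alpha\rbrace$. Hence stability of $(M)$ is equivalent to the condition that $x_jx^\beta/\min(x^\beta)\neq x^\alpha$ for every $x^\beta\in M$ and every $x_j>\min(x^\beta)$.

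It remains to identify this condition with the St-minimality of $x^\alpha$ for $J_{\geq s}$, whose definition a priori quantifies $x^\beta$ over all of $J_{\geq s}$. The operation $x^\beta\mapsto x_jx^\beta/\min(x^\beta)$ preserves degree, so any witness to non-St-minimality of $x^\alpha$ must have degree $s$ and thus lie in $J_s\cap \T$. The case $x^\beta = x^\alpha$ is impossible, since $x_j>\min(x^\alpha)$ forces $x_jx^\alpha/\min(x^\alpha)\neq x^\alpha$. Consequently $x^\beta$ effectively ranges over $M$, and the two conditions coincide. I do not anticipate any serious obstacle here; the proof is a direct unravelling of definitions, with the only subtle point being the degree-preservation argument that collapses the St-minimality quantifier from $J_{\geq s}$ down to the finite set $M$.
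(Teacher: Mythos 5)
Your proof is correct, and it takes a genuinely more elementary route than the paper. The paper's forward direction first applies Proposition \ref{prop:varieS}~(\ref{lem:PBTronc2}) to conclude that $M := (J_s\cap\T)\setminus\{x^\alpha\}$ is a Pommaret basis, and then invokes Theorem \ref{qsPB} (stability of a monomial ideal is equivalent to its minimal basis being the Pommaret basis) to deduce stability of $(M)$; the reverse direction is by contraposition, exhibiting $x_jx^\beta$ as an obstruction so that $M$ fails to be a Pommaret basis. You instead bypass the Pommaret-basis machinery entirely: you check stability of $(M)$ directly on its minimal basis $M$ (using the remark after Definition \ref{def:qstable}), use stability of $J_{\geq s}$ to localize the test term $x_jx^\beta/\min(x^\beta)$ inside $J_s\cap\T = M\cup\{x^\alpha\}$, and reduce both stability of $(M)$ and St-minimality of $x^\alpha$ to the same finite condition ($x_jx^\beta/\min(x^\beta)\neq x^\alpha$ for $x^\beta\in M$), obtaining a single two-sided equivalence chain. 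Both proofs hinge on the same key observations — that the move $x^\beta\mapsto x_jx^\beta/\min(x^\beta)$ preserves degree, which collapses the quantifier over $J_{\geq s}$ to degree $s$, and that $x^\alpha\notin(M)$ since all terms of $M$ have the same degree — but yours is leaner, whereas the paper's phrasing keeps the Pommaret-basis structure explicit, which is consonant with the framework used in the rest of the section.
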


\begin{proof}
We denote by $M$ the set $(J_s\cap \T) \setminus\lbrace x^\alpha\rbrace$.  

If $x^\alpha$ is St-minimal, then Proposition \ref{prop:varieS} (\ref{lem:PBTronc2}) holds, hence $M$ is a Pommaret basis. We now prove that the ideal generated by $M$ is stable. Suppose that $(M)$ is not stable. By Theorem \ref{qsPB}, we assume that there is a term $x^\beta\in M\setminus B_{(M)}$. This means that $x^\beta=x^\alpha\cdot x^\delta$ with $x^\alpha\in B_{(M)}$ and $\vert \delta \vert \geq 1$. In particular $\vert \alpha\vert<s$, but this is not possible since all terms in $M$ have degree $s$, and the same for the terms  in $B_{(M)}$ (see also Remark \ref{rem:varieP}, (\ref{varieP_iv})).

To prove the other implication, we proceed by contraposition. If $x^\alpha$ is not St-minimal, then there is $x^\beta \in M$ and $x_j>\min(x^\beta)$ such that $\displaystyle\frac{x_jx^\beta}{\min(x^\beta)}=x^\alpha$. Observe that the latter equality means that $x_jx^\beta\in \mathcal C_\PP(x^\alpha)$. Hence, $x_jx^\beta$ belongs to the ideal $(M)$ but does not belong to $\langle M\rangle_\PP$, that is $x_jx^\beta$ is an obstruction and $M$ is not a Pommaret basis.

\end{proof}


\section{Quasi-stable ideals and Hilbert Polynomial}\label{sec:AlgorithmQS}

In the present section, we recall some facts and prove some results giving us a strategy to compute the complete list of quasi-stable saturated ideals with a prescribed Hilbert polynomial. These results will also be used to prove the correctness of the  algorithm we design in Section \ref{sec:algo}.  We follow the lines of \cite{CLMR11}, \cite{L}, where the authors work under the hypothesis that $k$ has characteristic 0. We remove the hypothesis on the characteristic, and deal with quasi-stable ideals, thanks to Pommaret bases.

A crucial point in the algorithm is Gotzmann Regularity Theorem, that we now recall. For a proof, we refer to \cite[Appendix B.6]{vas} and\cite[Section 4.3]{BH}.

\begin{theorem}[Gotzmann Regularity Theorem]\label{GRT} 
Let  $I$ be a homogeneous ideal in $\Sk$. Write the Hilbert polynomial $P(z)$ of $I$ in the unique form
\[
P(z)={z+a_1 \choose a_1}+{z+a_2-1  \choose a_2}+\cdots +{z+a_r-(r-1) \choose a_r}
\]
with $a_1\geq a_2\geq \cdots\geq a_r$. Then the saturation $I^\sat$ of $I$ is $r$-regular.
\end{theorem}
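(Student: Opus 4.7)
The plan is to combine Macaulay's theorem on Hilbert functions with Gotzmann's Persistence Theorem, and then transfer the resulting numerical bound to a regularity bound via the Pommaret-basis characterization in Proposition \ref{prop:varieS}(\ref{lem:reg}) and its partner Proposition \ref{prop:varieS}(\ref{prop:varieS_iii}). Since $I$ and $I^\sat$ share the same Hilbert polynomial, I would replace $I$ by $I^\sat$ and prove the statement for a saturated homogeneous ideal. After extending scalars if necessary and applying a generic linear change of coordinates, the reverse-lexicographic initial ideal $J=\mathrm{in}_{\mathrm{rlex}}(I)$ is quasi-stable, shares the Hilbert polynomial of $I$, remains saturated, and satisfies $\reg(J)=\reg(I)$ by the Bayer--Stillman criterion. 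So it suffices to bound $\reg(J)$ for a saturated quasi-stable monomial ideal $J$ with Hilbert polynomial $P(z)$.

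Next I would invoke the two deep numerical inputs. Macaulay's theorem bounds the growth of $h_{\Sk/J}$ from one degree to the next through a function defined in terms of the Macaulay representation. Gotzmann's Persistence Theorem states that whenever a quotient's Hilbert function attains the Macaulay upper bound at some degree, it continues to do so at every higher degree, and the Hilbert function thereafter coincides with a polynomial written in Macaulay form. Applied to the lex-segment ideal $L$ with Hilbert polynomial $P(z)$, this shows that $L$ is generated in degrees at most $r$, where $r$ is precisely the number of summands in the given representation of $P(z)$: this is the original Gotzmann observation relating Hilbert polynomials to lex-segment ideals, and dually it also identifies $r$ as the degree from which the minimal-growth regime of any ideal with this Hilbert polynomial must have started.

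The final step is to transfer the bound to $J$. Persistence applied directly to $J$ says that as soon as $J$ attains minimal growth (the dual Macaulay bound on the ideal) at some degree $t$, it continues to attain it at every subsequent degree, and its Hilbert polynomial is then determined by $h_J(t)$ via the Macaulay formula. Since the Hilbert polynomial of $J$ is $P(z)$, whose Macaulay form has length $r$, the minimal-growth regime must be reached no later than degree $r$. From there on, the combinatorial structure of $J_{\geq r}$ mirrors that of a lex-segment: every term in $J_r\cap\T$ propagates to all its stable successors in degree $r+1$, which is precisely the condition for $J_{\geq r}$ to be stable. Proposition \ref{prop:varieS}(\ref{prop:varieS_iii}) now converts this directly into $\reg(J)\leq r$, and hence $\reg(I^\sat)\leq r$.

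The main obstacle is the very last implication — using Persistence to pass from the numerical stabilization of the Hilbert function to the structural conclusion that $J_{\geq r}$ is stable. This is the nontrivial core of Gotzmann's theorem and where one has to rely on the Macaulay--Gotzmann machinery in full force; it cannot be shortcut within the Pommaret framework alone. The surrounding steps — the reduction to a quasi-stable monomial ideal via the generic reverse-lex initial ideal, the lex-segment bound, and the translation between ``generators in degree $\leq r$'' and ``regularity $\leq r$'' — are either classical gin arguments or already recorded in Proposition \ref{prop:varieS}.
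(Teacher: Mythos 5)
The paper does not actually prove this theorem: the text just above it explicitly defers to Vasconcelos, Appendix B.6, and Bruns--Herzog, Section 4.3, so there is no in-paper argument to compare yours against. Your sketch does follow the classical route that those references take, so it is not ``a different approach'' so much as a summary of the standard one.

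That said, the proposal is not a proof; it is a roadmap with its central step missing, and you say so yourself in your final paragraph. The reduction steps at the beginning are fine: passing to $I^\sat$, to $J=\mathrm{gin}_{\mathrm{rlex}}(I^\sat)$ (quasi-stable, saturated, same Hilbert polynomial, same regularity by Bayer--Stillman), and at the end reading $\reg(J)\leq r$ off from the stability of $J_{\geq r}$ via Proposition~\ref{prop:varieS} and the truncation formula $\reg(J_{\geq r})=\max(r,\reg(J))$ --- all of this is sound. But the two sentences that carry the actual weight of the theorem are asserted, not argued: (1) ``the minimal-growth regime must be reached no later than degree $r$'' because the Gotzmann form of $P(z)$ has length $r$, and (2) once that regime is reached, ``the combinatorial structure of $J_{\geq r}$ mirrors that of a lex-segment \dots which is precisely the condition for $J_{\geq r}$ to be stable.'' Claim (1) is not a formal consequence of persistence applied to the lex ideal $L$; that only tells you $L$ is generated in degrees $\leq r$, and transferring the bound to an arbitrary $J$ with the same Hilbert polynomial is a separate theorem (essentially Bayer's result that the lex ideal has maximal regularity, or Green's crystallization argument). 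Claim (2) --- that Macaulay-extremal growth for a quasi-stable ideal forces stability in degree $r$ --- is also not justified; nothing in the Pommaret toolbox of \S\ref{sec:Pommaret} delivers it, and you would again need the full combinatorics of Macaulay's theorem for monomial ideals. Finally, your phrasing of Gotzmann persistence omits the hypothesis that the ideal be generated in degrees $\leq d$; without it the statement is false, and with it the applications you make of it need that generation bound established first, which is again circular at degree~$r$.

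In short: the framing is reasonable, and honest about where the gap sits, but what you have written deposits the entire content of the theorem into the two unproven sentences. Since the paper itself only cites the textbook proofs, it would be cleaner either to do the same, or --- if you want a self-contained argument --- to follow Bruns--Herzog~4.3 through the lex ideal directly; the detour through $\mathrm{gin}$ and the Pommaret characterization of regularity is extra machinery the classical proof does not need.
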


\begin{remark}\label{rem:gotz}\ 

\begin{enumerate}
\item We can rephrase a part of the statement of Theorem \ref{GRT}: for a fixed admissible Hilbert polynomial $P(z)$, there is an integer $r$ upper bounding the regularity of every ideal $I\subset S$ having Hilbert polynomial $P(z)$. The integer $r$ only depends on $P(z)$ and is called \emph{Gotzmann number}. The Gotzmann number is a sharp bound for the regularity of ideals having Hilbert polynomial $P(z)$: indeed, the regularity of the Lex-segment ideal with Hilbert polynomial $P(z)$ is  $r$ \cite{Ba1982}.
\item\label{rem:gotz_ii} If $P(z)$ has Gotzmann number $r$, then the Gotzmann number of the Hilbert polynomial $\Delta P(z):=P(z)-P(z-1)$ is $\leq r$. {This is a consequence of the proof of Gotzmann regularity theorem.}
\end{enumerate}
\end{remark}

Given a quasi-stable ideal $J\subset \Sk$, we call \emph{$x_{\ell+1}$-saturation} of $J$ the ideal $\left((J:x_{\ell}^\infty):x_{\ell+1}^\infty\right)$ and denote it by $J_{x_\ell x_{\ell+1}}$. We say that the quasi-stable ideal $J$ is $x_{\ell+1}$-saturated if $J=J_{x_\ell x_{\ell+1}}$.  

 First, we will establish a connection between properties of a quasi-stable ideal $J\subset \Sk$ and its generic hyperplane section. 
This will give us a recursive method to compute quasi-stable ideals with a given Hilbert polynomial. 

\begin{remark}\label{prop:ricors} We now relate the Hilbert polynomial of a quasi-stable ideal $J$ with that of $(J,x_\ell)/(x_\ell)$. This relation is well-known for strongly stable ideals (see for instance \cite[Section 5]{CLMR11}).

In $\Sk$, consider $J$ a quasi-stable ideal with Hilbert polynomial $P(z)$; the term $x_\ell$ is a non-zero divisor in $\Sk/J^{\mathrm{sat}}$ (Theorem \ref{thm:BorelEquiv}, (\ref{BorelEquiv_vi})). The ideal $(J,x_\ell)/(x_\ell)\subset S^{(\ell+1,n)}$  has the same Hilbert polynomial as $(J,x_\ell)\subset \Sk$. Further, $J^\sat$ has the same Hilbert polynomial as $J$, since for $t\gg 0$, $J^\sat_t=J_t$. Hence, we can consider  the short exact sequence
\[
0\longrightarrow \frac{\Sk}{J^\sat}(t-1)\stackrel{\cdot x_\ell}{\longrightarrow}\frac{\Sk}{J^\sat}(t)\longrightarrow \frac{\Sk}{(J^\sat,x_\ell)}(t)\longrightarrow 0,
\]
and we obtain that the Hilbert polynomial of $(J^\sat,x_\ell)$ is  $\Delta P(z)$. This is also the Hilbert polynomial of $(J,x_\ell)$, since $(J^\sat,x_\ell)_t = ((J^\sat)_t,x_\ell)_t = (J_t, x_\ell)_t = (J, x_\ell)_t$, for every $t\geq r$. 
Further, observe that $(J,x_\ell)/(x_\ell)\subset S^{(\ell+1,n)}$ is quasi-stable too and $((J,x_\ell)/(x_\ell))^\sat$  is generated by  $J_{x_\ell x_{\ell+1}}$ in $S^{(l+1,n)}$, by Theorem \ref{thm:BorelEquiv}, (\ref{BorelEquiv_v}).
\end{remark}

In order to compute all saturated quasi-stable ideals in $\Sk$ with a given Hilbert polynomial $P(z)$, we can use a recursion on the number of variables of the polynomial ring. Assume that we have a complete list $E$ of saturated quasi-stable ideals in $S^{(\ell+1,n)}$ generated in degrees $\leq  r$ with Hilbert polynomial $\Delta P(z)$. Then, we will construct every saturated quasi-stable ideal $J$ in $\Sk$ with Hilbert polynomial $P(z)$ such that $((J,x_\ell)/(x_\ell))^\sat$ is among the ideals in the list $E$.

The following Proposition and Lemmas prove the correctness of this recursive strategy and also show how to construct a quasi-stable ideal $J$ in $\Sk$ with Hilbert polynomial $P(z)$, starting from a quasi-stable ideal in $S^{(\ell+1,n)}$ with Hilbert polynomial $\Delta P(z)$.

\begin{proposition}\label{x1sat}
Let $J\subset \Sk$ be a saturated  quasi-stable ideal, let $P(z)$ be its Hilbert polynomial, $r$ be the Gotzmann number of $P(z)$ and consider $s\geq r$. Consider  the ideal $I:=J_{x_\ell x_{\ell+1}}$ and define $q:=\dim_k I_s-\dim_k J_s$. Then 
the Hilbert polynomial of $I$ is $P(z)-q$.
 \end{proposition}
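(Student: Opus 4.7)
The approach is to bound $\reg(I)$ by $r$ and then combinatorially show that $\dim_k I_s-\dim_k J_s$ is independent of $s\ge r$, by exhibiting a bijection $D_s\cong D_{s-1}$ via multiplication by $x_\ell$, where $D_s:=(I_s\cap\T)\setminus(J_s\cap\T)$.

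First, by Theorem \ref{GRT} and Proposition \ref{prop:varieS}(\ref{lem:reg}), $\reg(J)\le r$. Since $J$ is saturated, Lemma \ref{thm:PBSat}(\ref{PBSat_iii}) gives $\mathcal{P}(J)(\ell)=\emptyset$, and Lemma \ref{thm:PBSat}(i) with $j=\ell+1$ shows that $M:=\overline{\mathcal{P}(J)(\ell+1)}\cup\bigcup_{k=\ell+2}^n\mathcal{P}(J)(k)$ is a weak Pommaret basis of $I$. Every term of $M$ has degree at most $\reg(J)\le r$, and $\mathcal{P}(I)\subseteq M$ by Remark \ref{rem:varieP}(\ref{varieP_iii}), so $\reg(I)\le r$. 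Hence for each $s\ge r$, writing $P_I$ for the Hilbert polynomial of $\Sk/I$, we have
\[
|D_s|=\dim_k I_s-\dim_k J_s=P(s)-P_I(s).
\]

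The crucial step is the claim that \emph{every $z\in D_s$ with $s\ge r$ is divisible by $x_\ell$}. Suppose otherwise: $\min(z)\ge x_{\ell+1}$. Since $J$ is saturated, $I=(J:x_{\ell+1}^\infty)$, so there is a minimal $N\ge 1$ with $x_{\ell+1}^N z\in J$; in particular $x_{\ell+1}^{N-1}z\notin J$. Applying Lemma \ref{potenze}(\ref{potenze_ii}) to $x_{\ell+1}\cdot x_{\ell+1}^{N-1}z\in J$, either $x_{\ell+1}^N z\in\mathcal{P}(J)$ or $x_{\ell+1}>\min(x_{\ell+1}^{N-1}z)$; the second alternative is impossible because $\min(x_{\ell+1}^{N-1}z)\le x_{\ell+1}$ by the assumption on $z$. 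Therefore $x_{\ell+1}^N z\in\mathcal{P}(J)$, which forces $s+N=\deg(x_{\ell+1}^N z)\le\reg(J)\le r$, contradicting $s\ge r$ together with $N\ge 1$.

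With the claim in hand, the map $\Phi\colon D_{s-1}\to D_s$, $w\mapsto x_\ell w$, is injective because $x_\ell$ is a nonzero divisor on $\Sk/J$ (Theorem \ref{thm:BorelEquiv}(\ref{BorelEquiv_vi}) for the saturated quasi-stable ideal $J$). Its inverse $z\mapsto z/x_\ell$ is well-defined by the claim together with the fact that $x_\ell$ is also a nonzero divisor on $\Sk/I$: if $x_\ell f\in I$, then $x_{\ell+1}^M x_\ell f\in J$ for some $M$, whence $x_{\ell+1}^M f\in J$ (as $x_\ell$ is a nonzero divisor on $\Sk/J$), so $f\in I$. Thus $|D_s|=|D_{s-1}|$ for every $s\ge r+1$, and $|D_s|=q$ for all $s\ge r$. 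The identity $P(s)-P_I(s)=q$ for all such $s$ then gives the polynomial identity $P_I(z)=P(z)-q$. The main obstacle lies in the combinatorial claim, which is handled cleanly by Lemma \ref{potenze}(\ref{potenze_ii}) and the regularity bound of Proposition \ref{prop:varieS}(\ref{lem:reg}).
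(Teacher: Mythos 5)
Your argument is correct (modulo one sign slip) and takes essentially the same route as the paper: both show that every term of $I_s\setminus J_s$ with $s\geq r$ is divisible by $x_\ell$, so multiplication by $x_\ell$ is a degree-preserving bijection and $\dim_k I_s-\dim_k J_s$ is constant for $s\geq r$. The only differences are cosmetic: you isolate the minimal $N$ with $x_{\ell+1}^Nz\in J$ and invoke Lemma~\ref{potenze}(\ref{potenze_ii}) to force $x_{\ell+1}^Nz\in\PP(J)$, whereas the paper unwinds the Pommaret cone containment $x^\gamma x_{\ell+1}^t=x^\alpha x^\delta$ and descends on $t$; and you explicitly bound $\reg(I)\leq r$ via a weak Pommaret basis, a step the paper bypasses. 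The slip: you write that the second alternative is impossible because $\min(x_{\ell+1}^{N-1}z)\le x_{\ell+1}$, but the inequality you need, and the one that actually follows from $\min(z)\ge x_{\ell+1}$, is $\min(x_{\ell+1}^{N-1}z)\ge x_{\ell+1}$; that is what contradicts $x_{\ell+1}>\min(x_{\ell+1}^{N-1}z)$.
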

\begin{proof}
We prove that for every $s\geq r$, if $\dim_k I_s-\dim_k J_s=q$, then $\dim_k I_{s+1}-\dim_k J_{s+1}=q$.  

We consider the set of terms in $I_s\setminus J_s$ and denote them by $x^{\beta_1},\dots,x^{\beta_q}$. It is immediate that $x_\ell x^{\beta_1}, \dots,x_\ell x^{\beta_q}$ belong to $I_{s+1}\setminus J_{s+1}$: indeed, by Lemma \ref{potenze}, if the term $x^\beta$ does not belong to $J$, then $x_\ell x^\beta$ is not in $J$, because $J$ is saturated and no term in its Pommaret basis is divisible by $x_\ell$ (Lemma \ref{thm:PBSat},(\ref{PBSat_iii})). Hence $\dim_k I_{s+1}-\dim_k J_{s+1}\geq q$.

In order to prove the other inequality, we proceed by contradiction. Consider $x^\gamma \in I_{s+1}\setminus J_{s+1}$ with $\min(x^\gamma)\geq x_{\ell+1}$. Since $I$ is the $x_{\ell+1}$-saturation of $J$, we can consider the smallest integer $t>0$ such that $x^\gamma x_{\ell+1}^t\in J$. Hence, for some $x^\alpha \in \PP(J)$, $x^\gamma x_{\ell+1}^t\in C_\PP(x^\alpha)$. More explicitely,$x^\gamma x_{\ell+1}^t=x^\alpha \cdot x^\delta$ with $\max(x^\delta)\leq \min(x^\alpha)$. Observe that $\vert\alpha\vert\leq r<s+1=\vert\gamma\vert$ by Proposition \ref{prop:varieS} (\ref{lem:reg}), hence $\vert\delta\vert\geq 1$. Furthermore, $\min(x^\gamma x_{\ell+1}^t)=x_{\ell+1}=\min(x^\delta)$. We can then simplify $x_{\ell+1}$ and obtain $x^\gamma x_{\ell+1}^{t-1}=x^\alpha\cdot\displaystyle\frac{x^\delta}{x_{\ell+1}}	\in J$. But this contradicts the minimality of $t$. 
\end{proof}

\begin{lemma}\label{CostrPom}
Let $J$ be a quasi-stable ideal in $\Sk$, let $P(z)$ be its Hilbert polynomial  and let $r$ be the Gotzmann number of $P(z)$. For an arbitrary $s\geq r$, consider a St-minimal term $x^\beta \in J_{s}$ with $min(x^\beta)=x_\ell$ and let $M\subset \Sk$ be the set of terms $\{J_s\cap \T\} \setminus\lbrace x^\beta\rbrace$. 
Then the ideal generated by $M$ is stable  and its Hilbert polynomial is $P(z)+1$.
\end{lemma}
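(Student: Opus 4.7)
The plan is to split the statement into the two assertions: the stability of $(M)$, which should follow immediately from what has been proved, and the computation of the Hilbert polynomial, for which I would compare $(M)$ to $J_{\geq s}$ term by term using the Pommaret basis structure.

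For the stability part, I would note that $s\geq r=\reg(J)$, so Proposition \ref{prop:varieS}(\ref{prop:varieS_iv}) gives that $J_{\geq s}$ is stable with Pommaret basis equal to $J_s\cap\T$. Since $x^\beta$ is assumed St-minimal for $J_{\geq s}$, Corollary \ref{PBMin} applies directly and yields that $(M)$ is stable, with $M$ itself serving as its Pommaret basis.

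For the Hilbert polynomial part, I would compare $J_{\geq s}$ and $(M)$ degree by degree, for $t\geq s$. Since both $J_s\cap\T$ and $M$ are Pommaret bases, the Pommaret cones of their elements are disjoint, so
\[
J_t\cap\T \;=\; \bigsqcup_{x^\alpha \in J_s\cap\T} \bigl(\mathcal C_\PP(x^\alpha)\cap\T_t\bigr), \qquad (M)_t\cap\T \;=\; \bigsqcup_{x^\alpha \in M} \bigl(\mathcal C_\PP(x^\alpha)\cap\T_t\bigr).
\]
Therefore the set-theoretic difference $(J_t\setminus(M)_t)\cap\T$ is exactly $\mathcal C_\PP(x^\beta)\cap\T_t$. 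Now, because $\min(x^\beta)=x_\ell$, the Pommaret cone of $x^\beta$ consists of the terms $x^\beta x_\ell^k$ with $k\geq 0$, so in each fixed degree $t\geq s$ it contributes exactly one term, namely $x^\beta x_\ell^{t-s}$. This gives $\dim_k J_t - \dim_k (M)_t = 1$ for every $t\geq s$.

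Since $t\geq s\geq r\geq \reg(J)$, we have $h_{\Sk/J}(t)=P(t)$, and thus
\[
h_{\Sk/(M)}(t) \;=\; \dim_k \Sk_t - \dim_k (M)_t \;=\; \bigl(\dim_k \Sk_t-\dim_k J_t\bigr) + 1 \;=\; P(t)+1
\]
for all $t\geq s$, so the Hilbert polynomial of $(M)$ is $P(z)+1$ as claimed. No step is really an obstacle here: the substantive input (stability after removing a St-minimal element, and the description of the Pommaret basis of $J_{\geq s}$) is already in Corollary \ref{PBMin} and Proposition \ref{prop:varieS}. The only point requiring care is the observation that the Pommaret cone of a generator whose minimum variable is $x_\ell$ has exactly one element per degree, which is where the hypothesis $\min(x^\beta)=x_\ell$ is used.
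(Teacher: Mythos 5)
Your proof is correct, and it reaches the conclusion by a slightly cleaner route than the paper does. Both arguments hinge on the same two ingredients (Proposition \ref{prop:varieS} (\ref{prop:varieS_iv}) for the Pommaret basis of $J_{\geq s}$, Corollary \ref{PBMin} for stability of $(M)$ and the fact that $M$ is itself a Pommaret basis) and on the same key combinatorial fact, namely that $\mathcal C_\PP(x^\beta)$ contributes exactly one term in each degree $t\geq s$ because $\min(x^\beta)=x_\ell$. The difference lies in how you extract that the difference $J_t\setminus(M)_t$ is precisely $\mathcal C_\PP(x^\beta)\cap\T_t$: you read it off directly from the two disjoint Pommaret-cone decompositions of $J_t\cap\T$ and $(M)_t\cap\T$, whereas the paper proves $J_{s+t}\setminus (M)_{s+t}=\lbrace x^\beta x_\ell^{t}\rbrace$ by an explicit induction on $t$ (showing any term in the difference must be $x_j$ times the previous difference term, and checking $x^\beta x_\ell^t\notin(M)$ by the Pommaret-cone property). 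Your formulation makes the role of the disjoint-union structure of Pommaret bases more transparent and avoids the bookkeeping of the induction; the paper's induction is a bit more hands-on but reaches the identical identity. Both are valid, and yours would serve equally well as the proof.
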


\begin{proof}
First, we recall   that $J_s\cap \T$ is the Pommaret basis of $J_{\geq s}$ and the set $M$ is the Pommaret basis for the stable monomial ideal  it generates  (by Lemmas  \ref{prop:varieS}, items (\ref{lem:reg}) and (\ref{prop:varieS_iv}), and \ref{PBMin}).

We now show that for every $t\geq 0$, $J_{s+t}\setminus (M)_{s+t}$ contains only the term $x^\beta x_\ell^{t}$.
 We proceed by induction on $t$. If $t=0$, $J_s\setminus (M)_s=J_s\setminus M=\lbrace x^\beta\rbrace$ by definition of $M$. We can assume that the thesis holds for every integer smaller than $t>0$. 

Any term in $J_{s+t}\setminus (M)_{s+t}$ is a multiple of a term in $J_{s+t-1}\setminus (M)_{s+t-1}=\lbrace x^\beta x_\ell^{t-1}\rbrace$, by inductive hypothesis. Hence, consider $x^\gamma=x^\beta x_\ell^{t-1} x_j$ with $x_j>x_\ell$. Since $\min(x^\beta)=x_\ell$, there is $x^\alpha \in J_s$, $x^\alpha\neq x^\beta$, such that $x^\gamma$ belongs to $\mathcal C_\PP(x^\alpha)$. But $x^\alpha$ also belongs to $M$, hence $x^\gamma$ belongs to $(M)$ too.

Suppose now that $x^\beta x_\ell^{t}\in (M)$. Since $M=\{J_s\cap \T\} \setminus\lbrace x^\beta\rbrace$ is a Pommaret basis for $(M)$, there is a term $x^\alpha \in J_{s}$, $x^\alpha\neq x^\beta$, such that $x^\beta x_\ell^{t}\in \mathcal C_\PP(x^\alpha)$. Since $x_\ell=\min(x^\beta x_\ell^t)$ and $t>0$, then $x^\beta x_\ell^{t-1}$ belongs to $\mathcal C_\PP(x^\alpha)\subset (M)$ too, against the inductive hypothesis. Hence, $J_{s+t}\setminus (M)_{s+t}=\lbrace x^\beta x_\ell^t\rbrace$. 

\end{proof}

\begin{lemma}\label{lem:x1sat}
Let $I$ and $J$ be quasi-stable ideals in $\Sk$, let $P_1(z)$ be the Hilbert polynomial of $I$ and $P_2(z)$ be the Hilbert polynomial of $J$. If, for every $s\gg 0$, we have $I_s\subset J_s$ and $P_1(z) = P_2 (z) + a$, with $a\in \mathbb N$, then $I$ and $J$ have the same $x_{\ell+1}$-saturation and for every $s\gg 0$ there is a term $x^\alpha \in J_s\setminus I_s$, with $\min(x^\beta)=x_\ell$, which is St-minimal.
\end{lemma}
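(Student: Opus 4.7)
The plan is to exploit that $P_1-P_2=a$ is a constant, so $\Delta P_1=\Delta P_2$. Applying Remark~\ref{prop:ricors} to both $I$ and $J$, the two quasi-stable ideals $(I,x_\ell)/(x_\ell)$ and $(J,x_\ell)/(x_\ell)$ in $S^{(\ell+1,n)}$ share the Hilbert polynomial $\Delta P_2(z)$. The hypothesis $I_s\subseteq J_s$ for $s\gg 0$ provides an inclusion between them, so these two ideals must coincide in all sufficiently large degrees (an inclusion of ideals with the same Hilbert polynomial collapses to an equality in large degree), and therefore their saturations in $S^{(\ell+1,n)}$ agree. Identifying those saturations via Remark~\ref{prop:ricors} with the ideals generated by $I_{x_\ell x_{\ell+1}}$ and $J_{x_\ell x_{\ell+1}}$ yields the first assertion.

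For the second part, I fix $s\gg 0$ large enough that $I_{\geq s}$ and $J_{\geq s}$ are stable (Proposition~\ref{prop:varieS}\,(\ref{prop:varieS_iv})), $I_s\subseteq J_s$, and the degree-$s$ parts of $(I,x_\ell)/(x_\ell)$ and $(J,x_\ell)/(x_\ell)$ already agree. Read termwise, the last equality says: a degree-$s$ term not divisible by $x_\ell$ lies in $I$ if and only if it lies in $J$. Hence every term of the $a$-element set $J_s\setminus I_s$ must have $\min=x_\ell$. Among these $a$ terms I would pick $x^\alpha$ with maximal $x_\ell$-exponent $\alpha_\ell$.

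To verify that this $x^\alpha$ is St-minimal for $J_{\geq s}$, I argue by contradiction: assume $x^\alpha=x_j\,x^\beta/\min(x^\beta)$ for some $x^\beta\in J_s$ and $x_j>\min(x^\beta)$. The case $x^\beta\in I_s$ is impossible, since the stability of $I_{\geq s}$ would immediately force $x^\alpha\in I_s$. Therefore $x^\beta\in J_s\setminus I_s$, so $\min(x^\beta)=x_\ell$ by the previous paragraph, and the St-move strips one copy of $x_\ell$, giving $\alpha_\ell=\beta_\ell-1<\beta_\ell$, which contradicts the maximal choice of $\alpha_\ell$. I expect the main subtlety to be in isolating the right discrete invariant---namely $\alpha_\ell$---along which St-moves originating in $J_s\setminus I_s$ strictly decrease; once this choice is made, both contradictions fall out of the stability of $I_{\geq s}$ and the termwise equality of the previous paragraph.
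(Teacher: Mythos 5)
Your proof is correct, and it takes a genuinely different route from the paper's. For the first assertion (same $x_{\ell+1}$-saturation), you push the problem down to $S^{(\ell+1,n)}$ via Remark~\ref{prop:ricors} and the observation that $\Delta P_1=\Delta P_2$, then use the elementary fact that an inclusion of ideals with equal Hilbert polynomials is an equality in high degree; the paper instead works directly in $\Sk$ with Pommaret cones, showing for $a=1$ that the unique term $x^\alpha\in J_s\setminus I_s$ satisfies $x^\alpha x_\ell^t\notin I_{s+t}$ (via disjointness of Pommaret cones) so that $x^\alpha x_{\ell+1}\in I_{s+1}$, and then invoking Lemma~\ref{thm:PBSat}. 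For the second assertion, you avoid induction on $a$ entirely by extracting a strictly decreasing invariant: among the $a$ terms of $J_s\setminus I_s$ (all forced to have $\min=x_\ell$ by your degree-$s$ termwise identification), the one maximizing $\alpha_\ell$ is St-minimal, because an St-move producing it would have to come either from $I_s$ (killed by stability of $I_{\geq s}$) or from $J_s\setminus I_s$ (killed by maximality of $\alpha_\ell$, since such a move must strip an $x_\ell$). The paper instead first disposes of $a=1$ using the uniqueness and Pommaret-cone count for $x^\alpha$, then handles $a>1$ by induction via Lemma~\ref{CostrPom}, peeling off one St-minimal term at a time. Your argument buys a uniform, non-inductive treatment of all $a\geq 1$ and makes the role of the $x_\ell$-exponent explicit; the paper's version is tighter to its existing Pommaret-basis machinery (in particular it re-derives $\min(x^\alpha)=x_\ell$ from the size of $\mathcal C_\PP(x^\alpha)$ rather than from the quotient modulo $x_\ell$). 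Both are valid; yours is arguably cleaner.
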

\begin{proof} Let $s \geq \max\{\reg(I), \reg(J)\}$. If $a=1$, let $x^\alpha$ be the unique term in $J_{s}\setminus I_{s}$. Then, both $x^\alpha x_{\ell}$ and $x^\alpha x_{\ell+1}$ belong to $J_{s+1}$. Since $J$ and $I$ are quasi-stable and $s \geq \max\{\reg(I), \reg(J)\}$, by Lemma  \ref{prop:varieS},  (\ref{prop:varieS_iv}), $J_{s}\cap \T$ is the Pommaret basis of $J_{\geq s}$ and $I_{s}\cap \T$ is the Pommaret basis of $I_{\geq s}$. Suppose that for some $t>0$, $x^\alpha x_{\ell}^t\in I_{s+t}$: then $x^{\alpha}x_{\ell}^t\in \mathcal C_\PP(x^\beta)$ for some $x^\beta\in I_s\subset J_s$. This is not possible, since $x^\alpha \in J_s$, hence $x^\alpha x_{\ell}^t$ is not in the Pommaret cone of any other term in $J_s$.

Since for every $t>0$, $J_{s+t}\setminus I_{s+t}$ contains the unique term $x^\alpha x_{\ell}^t$, $x^\alpha x_{\ell+1}$ belongs to $I_{s+1}$. This is enough to say that $I$ and $J$ have the same $x_{\ell+1}$-saturation,  by Lemma \ref{thm:PBSat}. 
Further,  $I_s\cap\T=(J_s\cap \T)\setminus\lbrace x^\alpha\rbrace$ and this implies that $x^\alpha$ is St-minimal, by Lemma \ref{PBMin}. Finally, every term $x^\beta\in \mathcal C_\PP(x^\alpha)$, $\vert \beta \vert\geq s$, does not belong to $I_{\geq s}$, because its Pommaret basis is $I_s=J_s\setminus \lbrace x^\alpha\rbrace$. If $\min(x^\alpha)=x_j>x_\ell$, then the number of terms in $\mathcal C_\PP(x^\alpha)$ of any fixed degree $\geq s$ would be strictly bigger than 1, in contradiction with $a=1$. Finally, for every $t>0$, the term $x^\alpha x_\ell^t$ is St-minimal for $J_{\geq s+t}$.

If  $a> 1$, the thesis follows
 by induction and by applying Lemma \ref{CostrPom}: indeed, observe that among the terms in the set $J_s\setminus I_s$, one is St-minimal, otherwise $I_s$ would not be stable (Lemma \ref{PBMin})  and its minimal variable must be $x_\ell$, otherwise $P_1(z)-P_2(z)$ would not be constant. 
\end{proof}

\begin{remark}
Proposition \ref{x1sat},  Lemmas \ref{CostrPom} and \ref{lem:x1sat} generalize \cite[Propositions 4.2, 4.3, 4.4]{CLMR11} to quasi-stable ideals. Indeed,  the quoted results of \cite{CLMR11} are proved under the specific hypothesis that $J$ is a strongly stable ideal (see Remark \ref{combinpbase}).
\end{remark}

\section{An algorithm to compute quasi-stable ideals with a given Hilbert polynomial}\label{sec:algo}

We  now present the algorithm that computes the complete list of quasi-stable monomial ideals $J\subset \Sk$ with a given Hilbert polynomial $P(z)$.
More precisely, the algorithm takes as input $n$, $0\leq \ell\leq n-1$ and an {admissible} Hilbert polynomial $P(z)$ for ideals in $\Sk$, and returns the list of saturated quasi-stable ideals $J$ in $\Sk$ having Hilbert polynomial  $P(z)$. The list we  obtain is independent on the characteristic of the field $k$.

We define an auxiliary function: \\
$\textsc{StMinimal}(J,\ell,n,s)$: takes as input the quasi-stable ideal $J\subset \Sk$ and the integer $s$ and returns the St-minimal elements of $J_{s}$ with minimal variable $x_\ell$.

\begin{algorithm}[h]
\caption{\label{alg:Remove}\textsc{Remove}$(I,\ell, n, s , q)$}
\begin{algorithmic}[1]
\REQUIRE  $I$ set of monomials generating a quasi-stable ideal;
\REQUIRE $\ell$ first index of the variables in the polynomial ring;
\REQUIRE  $n$  least index of the variables of the polynomial ring;
\REQUIRE  $s$ upper bound on $\reg(I)$;
\REQUIRE  $q$ non-negative integer;
\ENSURE $L$ set of the quasi-stable ideals $J$  obtained by removing $q$ St-minimal terms divisible by $x_\ell$ from $I_s$ and then saturating;
\STATE $L\leftarrow \emptyset$;
\IF{$q$=0}
\RETURN\label{step0} $L \cup I^\sat$;
\ELSE
\STATE\label{stepMin} $M\leftarrow \textsc{StMinimal}(I,\ell,n,s)$;
\FORALL{$x^\alpha\in M$}
\STATE\label{stepalgo2ricors} $L\leftarrow L\cup \textsc{Remove}(I_{s}\setminus\lbrace x^\alpha\rbrace,\ell, n,  s , q-1)$;
\ENDFOR
\ENDIF
\RETURN $L$;
\end{algorithmic}
\end{algorithm}

\begin{theorem}\label{thm:corrRem}
Algorithm \ref{alg:Remove}, \textsc{Remove}$(I,\ell, n,  s , q)$, returns the set of all quasi-stable ideals in the polynomial ring $\Sk$ contained in $I_s$, having the same $x_{\ell+1}$-saturation as $I$  
and having  Hilbert polynomial $P_1(z)+q$, where $P_1(z)$ is the Hilbert polynomial of $I$.
\end{theorem}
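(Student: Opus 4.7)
My plan is induction on $q$, proving both directions simultaneously: every ideal returned by \textsc{Remove}$(I,\ell,n,s,q)$ has the claimed properties, and every saturated quasi-stable ideal with those properties lies in the output.

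For the base case $q=0$, the algorithm returns $\{I^\sat\}$. Since $s\geq \reg(I)$ the ideal $I$ is $s$-saturated, so $I^\sat_t = I_t$ for every $t\geq s$; thus $I^\sat$ has Hilbert polynomial $P_1(z)$, trivially the same $x_{\ell+1}$-saturation as $I$, and $I^\sat_s = I_s$. Conversely, if $J$ is a saturated quasi-stable ideal with $J_s\subseteq I_s$, Hilbert polynomial $P_1(z)$, and same $x_{\ell+1}$-saturation as $I$, then matching dimensions in each degree $t\geq s$ forces $J_t = I_t$ for $t\geq s$, whence $J = J^\sat = I^\sat$ since saturation is determined by sufficiently high degrees.

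For the inductive step, assume the statement holds for $q-1$. For \emph{soundness}, each recursive branch fixes an St-minimal term $x^\alpha \in I_s$ with $\min(x^\alpha)=x_\ell$. By Lemma \ref{CostrPom} the ideal $I':=(I_s\setminus\{x^\alpha\})$ is stable with Hilbert polynomial $P_1(z)+1$, and $\reg(I')\leq s$ by Proposition \ref{prop:varieS}(\ref{lem:reg}) because its Pommaret basis sits in degree $s$. Lemma \ref{lem:x1sat} applied with $a=1$ yields that $I$ and $I'$ share the same $x_{\ell+1}$-saturation. The inductive hypothesis applied to \textsc{Remove}$(I'_s,\ell,n,s,q-1)$ then returns saturated quasi-stable ideals contained in $I'_s\subseteq I_s$, with Hilbert polynomial $(P_1(z)+1)+(q-1)=P_1(z)+q$, and with $x_{\ell+1}$-saturation agreeing with that of $I$. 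For \emph{completeness}, let $J$ be any saturated quasi-stable ideal satisfying the hypotheses. Apply Lemma \ref{lem:x1sat} to the pair $(J,I)$ with $a=q\geq 1$: this produces an St-minimal $x^\alpha \in I_s\setminus J_s$ with $\min(x^\alpha)=x_\ell$, so $x^\alpha$ is one of the terms enumerated in step \ref{stepMin}. Setting $I':=(I_s\setminus\{x^\alpha\})$, we get $J_s\subseteq I'_s$, Hilbert polynomial of $I'$ equal to $P_1(z)+1$, and the $x_{\ell+1}$-saturation of $I'$ equal to that of $I$. The inductive hypothesis applied to the corresponding recursive call then places $J$ in the output.

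The main subtlety I expect is the bookkeeping on regularity: one must verify that the running hypothesis $s\geq \reg(\cdot)$ is preserved at every recursion level (so that $I'_s$ is still a Pommaret basis of $I'_{\geq s}$ by Proposition \ref{prop:varieS}(\ref{prop:varieS_iv})) and that, for every candidate $J$, the integer $s$ is also large enough for Lemma \ref{lem:x1sat} to apply to the pair $(J,I)$. Once this is checked, the two implications are essentially direct consequences of Lemmas \ref{CostrPom} and \ref{lem:x1sat}; duplicate outputs from distinct branches are harmless because the theorem only asserts set equality.
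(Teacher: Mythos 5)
Your proof is correct and takes essentially the same route as the paper, but you have spelled out considerably more than the paper's own very terse argument. The paper's proof only verifies soundness (via Lemma \ref{CostrPom}) and termination, and then points to Lemma \ref{lem:x1sat} for the $x_{\ell+1}$-saturation claim; it does not explicitly argue completeness, i.e., that \emph{every} saturated quasi-stable ideal contained in $I_s$ with Hilbert polynomial $P_1(z)+q$ and the right $x_{\ell+1}$-saturation actually arises from some branch. You make this explicit by applying Lemma \ref{lem:x1sat} to the pair $(J,I)$ to exhibit an St-minimal $x^\alpha\in I_s\setminus J_s$ with $\min(x^\alpha)=x_\ell$ and then pushing $J$ into a recursive call, which is the right way to close that gap. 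You also correctly flag the only genuine subtlety, namely the regularity bookkeeping: the hypothesis $s\ge\reg(I)$ propagates through the recursion because each $I'=(I_s\setminus\{x^\alpha\})$ has its Pommaret basis in degree $s$ (Proposition \ref{prop:varieS}(\ref{lem:reg})), and for the completeness direction one needs $s\ge\reg(J)$ as well, which holds in the intended use of the algorithm since $s$ is taken at least the Gotzmann number of the target polynomial and hence bounds the regularity of every saturated candidate. In short: same decomposition and same two key lemmas as the paper, with the completeness half and the regularity hypothesis made explicit where the paper leaves them implicit.
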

\begin{proof}
If $q=0$, Algorithm \ref{alg:Remove} terminates at line \ref{step0} and its output is correct. 

If $q>0$, at line \ref{stepMin} the algorithm computes the set of St-minimal terms $x^\alpha$ with $\min(x^\alpha)=x_\ell$. By Lemma \ref{CostrPom}, the set of terms in $I_s\setminus \{x^\alpha\}$ generates a stable ideal with Hilbert polynomial $P_1(z)+1$. 

The algorithm terminates because at each recursive call at line \ref{stepalgo2ricors},  the number of terms to remove decreases.

Furthermore, observe that applying Algorithm \ref{alg:Remove} on the quasi-stable ideal $I$ with Hilbert polynomial $P(z)$, we obtain as output ideals having the same $x_{\ell+1}$-saturation as $I$, by Lemma \ref{lem:x1sat}. 
 \end{proof}

\begin{algorithm}[h]
\begin{algorithmic}[1]
\caption{\label{alg:QSideals}  \textsc{QuasiStable}$(\ell,n,P(z),s)$}
\REQUIRE $\ell$ first index of the variables in the polynomial ring;
\REQUIRE  $n$  least index of the variables of the polynomial ring;
\REQUIRE  $P(z)$ admissible Hilbert polynomial;
\REQUIRE $s$ positive integer upper bounding the Gotzmann number of $P(z)$;
\ENSURE $F$ set of the saturated quasi-stable ideals $J$ in the polynomial ring $\Sk$ having Hilbert polynomial $P(z)$;
\IF{$P(z)=0$}
\RETURN $\lbrace (1)\rbrace$;\label{step:primoReturn}
\ELSE
\STATE$E\leftarrow \textsc{QuasiStable}(\ell+1,n,\Delta P(z),s)$;\label{step:ricors} 
\STATE $F\leftarrow \emptyset$;
\FORALL{$J\in E$}
\STATE\label{step1morevar} $I\leftarrow J\cdot k[x_{\ell},\dots,x_n]$;
\STATE\label{stepdefq} $q\leftarrow P(s)-{n-\ell+s \choose s}+\dim_k I_s$;
\IF{$q\geq 0$}
\STATE\label{steprem}{$F\leftarrow F\cup \textsc{Remove}(I,\ell,n,s,q)$};
\ENDIF
\ENDFOR
\ENDIF
\RETURN $F$;
\end{algorithmic}
\end{algorithm}

\begin{theorem}\label{thm:algoqs}
Algorithm \ref{alg:QSideals}, \textsc{QuasiStable}$(\ell,n,P(z),s)$, returns the set of all quasi-stable sa\-tu\-ra\-ted ideals in the polynomial ring $\Sk$ with Hilbert polynomial $P(z)$.
\end{theorem}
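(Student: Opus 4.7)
The plan is to induct on $n-\ell$. The base case is $P(z)=0$, handled by line~\ref{step:primoReturn}: the only saturated ideal with zero Hilbert polynomial is $(1)$, since $P(z)=0$ forces $J_t=\Sk_t$ for $t\gg 0$, and any saturated ideal containing $\Sk_{\geq N}$ must be the whole ring.

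For the inductive step I would assume the recursive call at line~\ref{step:ricors} is correct, so that $E$ is exactly the set of saturated quasi-stable ideals in $S^{(\ell+1,n)}$ with Hilbert polynomial $\Delta P(z)$; note that $s$ remains a valid upper bound on the Gotzmann number of $\Delta P(z)$ by Remark~\ref{rem:gotz}(\ref{rem:gotz_ii}). I then need to verify both inclusions between the output $F$ and the target set.

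For completeness, let $J'\subset\Sk$ be an arbitrary saturated quasi-stable ideal with Hilbert polynomial $P(z)$, and set $\tilde J := ((J',x_\ell)/(x_\ell))^{\sat}\subset S^{(\ell+1,n)}$. By Remark~\ref{prop:ricors}, $\tilde J$ is saturated and quasi-stable with Hilbert polynomial $\Delta P(z)$, hence $\tilde J\in E$ by the inductive hypothesis. The same remark identifies the extension $I := \tilde J\cdot\Sk$ (computed in line~\ref{step1morevar}) with $J'_{x_\ell x_{\ell+1}}$, so $J'\subseteq I$, and Proposition~\ref{x1sat} shows the Hilbert polynomial of $I$ is $P(z)-q$ with $q=\dim_k I_s-\dim_k J'_s$. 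This coincides with the quantity computed in line~\ref{stepdefq} because $\dim_k\Sk_s=\binom{n-\ell+s}{s}$ and $\dim_k\Sk_s-\dim_k J'_s=P(s)$. Since $J'$ and $I$ share their $x_{\ell+1}$-saturation (namely $I$), Theorem~\ref{thm:corrRem} applied to $\textsc{Remove}(I,\ell,n,s,q)$ places $J'$ inside $F$.

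For soundness, given $\tilde J\in E$ I would first observe that $I := \tilde J\cdot\Sk$ is quasi-stable with the same Pommaret basis as $\tilde J$ (Lemma~\ref{lem:1morev}), so no Pommaret generator of $I$ is divisible by $x_\ell$ or by $x_{\ell+1}$. A short monomial argument shows that $I$ is then saturated in $\Sk$: if $x^\beta x_\ell^t\in I$, some minimal monomial generator of $I$ divides it, and being $x_\ell$-free this generator divides $x^\beta$, so $x^\beta\in I$. Consequently, line~\ref{steprem} calls $\textsc{Remove}$ on a saturated quasi-stable input with Hilbert polynomial $P(z)-q$, and Theorem~\ref{thm:corrRem} guarantees every output is saturated, quasi-stable, and has Hilbert polynomial $P(z)$. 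Termination is immediate, since $\deg\Delta P(z)<\deg P(z)$ forces the recursion to reach $P(z)=0$ in finitely many steps. The main obstacle is bookkeeping: matching the algorithm's numerical $q$ to the Hilbert polynomial difference through Proposition~\ref{x1sat}, and making sure that the parameter $s$ remains a valid regularity/Gotzmann bound throughout the recursion; once these are checked, the correctness reduces to Theorem~\ref{thm:corrRem} and the inductive hypothesis.
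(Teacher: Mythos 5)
Your proof is correct and follows essentially the same recursive strategy as the paper: base case $P(z)=0$, inductive use of the recursive call via Remark~\ref{prop:ricors}, lifting each $\tilde J\in E$ to $I=\tilde J\cdot\Sk$ via Lemma~\ref{lem:1morev}, matching $q$ to the quantity in Proposition~\ref{x1sat}, and invoking Theorem~\ref{thm:corrRem}; your explicit completeness/soundness split is a cosmetic reorganization of the same argument. One small point worth noting: you correctly identify the Hilbert polynomial of $I$ as $P(z)-q$, while the paper's proof of this theorem writes \lq\lq$P(z)+q$\rq\rq, which is a sign typo (consistency with Proposition~\ref{x1sat}, with Theorem~\ref{thm:corrRem}, and with the $q<0$ discard step all require $P(z)-q$).
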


\begin{proof}
We prove correctness of Algorithm  \ref{alg:QSideals} by induction on  $\Delta^m P(z)$. It is sufficient to consider $s=r$, where $r$ is the Gotzmann number of $P(z)$, which upper bounds  the Gotzmann number of $\Delta^m P(z)$ for every $m\geq 0$ (see Remark \ref{rem:gotz}, item \ref{rem:gotz_ii}.).

If $P(z)=0$, then the ideal $(1)$ is the only saturated quasi-stable ideal  in $\Sk$ , hence  Algorithm  \ref{alg:QSideals} returns the correct set (line \ref{step:primoReturn}).

Assume now that $P(z)\neq 0$ and that Algorithm  \ref{alg:QSideals} returns the correct set for $\Delta P(z)$. Then, the recursive call at line \ref{step:ricors}  returns the complete list of the $x_{\ell+1}$-saturations of the ideals we look for, as observed in Remark \ref{prop:ricors}. 
Consider $J\subset S^{(l+1,n)}$ belonging to the output of $ \textsc{QuasiStable}(\ell+1,n,\Delta P(z),r)$ and observe that by Lemma \ref{lem:1morev} and Proposition \ref{prop:varieS} \eqref{prop:varieS_iii}, the ideal $I=J\cdot \Sk$ defined at line \ref{step1morevar} of Algorithm \ref{alg:QSideals} is quasi-stable, hence $I_r$ is stable. Furthermore, the Hilbert polynomial of $I$ is $P(z)+q$, were $q$ is defined at line \ref{stepdefq}. There are three possibilities:
\begin{itemize}
\item if $q<0$, there exist no quasi-stable ideals $I\subseteq \Sk$ with Hilbert polynomial $P(z)$ and such that $(I,x_\ell)/(x_\ell)\simeq J$, hence $J$ has to be discarded, by Proposition \ref{x1sat};
\item if $q=0$ $J^\sat\cdot \Sk$ is one of the ideals sought;
\item if $q>0$, we apply algorithm Algorithm \ref{alg:Remove} to obtain the quasi-stable ideals $I\subseteq \Sk$ with Hilbert polynomial $P(z)$ and such that $(I,x_\ell)/(x_\ell)\simeq J$. 
\end{itemize}
\end{proof}

\begin{remark}
Observe that the integer $s$ input of Algorithm \ref{alg:QSideals} upper bounds the regularity of every ideal $I\subseteq S^{(j,n)}$ arising along the computations (as already pointed out in Remark \ref{rem:gotz}, item \ref{rem:gotz_ii}). Hence, the evaluation at $s$ of the Hilbert function of $S^{(j,n)}/I$  is the same as the evalutation at $s$ of the Hilbert polynomial of $I$. This ensures that if $J^\sat\subseteq  S^{(l+1,n)}$ has  Hilbert polynomial $\Delta P(z)$, then the Hilbert polynomial of $J^\sat\cdot \Sk$ is $P(z)+q$  for some $q\in \mathbb Z$.
\end{remark}

\section{Borel-fixed ideals}

In the present section, we will recall the definition and properties of \emph{Borel-fixed ideals}. These ideals are interesting by themselves, since they have a rich combinatorial structure, but also they are used to investigate properties of other polynomial ideals. Indeed, if $k$ is infinite, it is possible to compute the  \emph{generic initial ideal} of a polynomial ideal in $\Sk$. The generic initial ideal of an ideal $I$ is Borel-fixed and it is used to investigate properties of $I$ \cite{Green}. 
If $\mathrm{char}(k)=0$, it is quite simple to deal with a Borel-fixed ideal, while if $\mathrm{char}(k)>0$, the combinatorial structure of a Borel-fixed ideal is more entangled. However, we will be able to handle it using results of the previous sections.

\begin{definition}
Let $\mathrm{GL}(n-\ell+1,k)$ be the general linear group, that is the group of invertible $(n-\ell+1)\times(n-\ell+1)$-matrices with entries in $k$. Every $g=(g_{ij})_{i,j\in \{0,\dots, n-\ell\}}\in \mathrm{GL}(n-\ell+1,k)$ induces an automorphism
\[
\begin{array}{rcl}
g:\Sk&\rightarrow & \Sk\\
f(x_\ell,\dots,x_n)&\mapsto & f(\sum_{j=0}^{n-\ell} g_{0j}x_j,\dots, \sum_{j=0}^{n-\ell} g_{n-\ell \, j}x_j)
\end{array}
\]
 For every ideal $I\subseteq \Sk$, we write $g(I)$ for $(g(f(x_\ell,\dots,x_n))\vert  f \in I)$.
\end{definition}
 
We denote by $\mathcal B$ the Borel subgroup of $\mathrm{GL}(n-\ell+1,k)$ consisting of upper  triangular matrices. 
\begin{definition}
Let $I\subset \Sk$ be a homogeneous ideal. We say that $I$ is \emph{Borel-fixed} if for every $g\in \mathcal B$, $g(I)=I$.
\end{definition}

Every Borel-fixed ideal is monomial \cite[Theorem 15.23]{Eis}.  Furthermore, observe that if the monomial ideal $J$ is Borel-fixed, then $J_{\geq m}$ is. Indeed, for every $g\in \mathrm{GL}(n-\ell+1,k)$, for every $f\in \Sk$, $g(f)$ has the same degree as $f$. Hence, for every $m$, if $J$ is Borel-fixed, then $g(J_m)=J_m$.

\begin{definition}
Let $p$ be a prime number, $a$ and $b$ be natural numbers. We say that  $a\prec_p b$ if  and only if   ${b \choose a}\neq 0 \mod p$. We extend this definition for $p=0$ posing that $a\prec_0 b$ if and only if $a\leq b$ in the usual sense.
\end{definition}

\begin{definition}\label{def:mossep}
For every $i<j$, for every $s>0$, we define the \emph{$s$-th increasing move} on the term $x^\alpha\in \T$ as
\[
e_{i,j}^{+(s)}(x^\alpha)=\frac{x_j^sx^\alpha}{x_i^s}=x_\ell^{\alpha_\ell}\cdots x_i^{\alpha_i-s}\cdots x_j^{\alpha_j+s}\cdots x_n^{\alpha_n}.
\]
We say that the increasing move $e_{i,j}^{+(s)}$ on the term $x^\alpha$  is \emph{$p$-admissible} if $e_{i,j}^{+(s)}(x^\alpha)\in \T$ and $s\prec_p \alpha_i$. 
\end{definition}

\begin{definition}
For a fixed prime $p$, we define the following relation on the terms of $\T$: $x^\alpha<_p x^\beta$ if and only if there is a $p$-admissible increasing move $e_{i,j}^{+(s)}$ such that $e_{i,j}^{+(s)}(x^\alpha)=x^\beta$.
The transitive closure of this relation gives a partial order on the set of monomials of any fixed degree, that we will keep on denoting by $<_{p}$.
\end{definition}

\begin{theorem}\cite[Theorem 15.23]{Eis}\label{combinp}
Let $\mathrm{char}(k)=p\geq 0$ and $J\subset \Sk$ be a monomial ideal. $J$ is Borel-fixed if and only if for every $x^\alpha \in B_J$, if $x^\alpha<_{p}x^\beta$, then $x^\beta \in J$.
\end{theorem}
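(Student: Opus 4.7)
The plan is to reduce the statement to understanding how the elementary generators of $\mathcal{B}$ act on individual monomials. I would first use that $\mathcal{B}$ is generated by the diagonal torus (which preserves every monomial ideal, and so contributes nothing to the problem) together with the one-parameter unipotent subgroups $g_{i,j}(c):x_i\mapsto x_i+cx_j$, $x_k\mapsto x_k$ for $k\neq i$, indexed by $\ell\leq i<j\leq n$ and $c\in k$. The only computation needed is the binomial expansion
\begin{equation*}
g_{i,j}(c)(x^\alpha)=\sum_{s=0}^{a}\binom{a}{s}c^s\,e_{i,j}^{+(s)}(x^\alpha),\qquad a:=\alpha_i,
\end{equation*}
after which the surviving monomials for $c=1$ in characteristic $p$ are exactly those indexed by $s\prec_p a$, by Lucas' theorem. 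Throughout I would use the standard fact that a polynomial lies in a monomial ideal if and only if each of its monomial terms does.

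For the forward direction, Borel-fixedness gives $g_{i,j}(1)(x^\gamma)\in J$ for every $x^\gamma\in J$, and the monomial criterion then forces every $e_{i,j}^{+(s)}(x^\gamma)$ with $s\prec_p\gamma_i$ to lie in $J$. Iterating this observation along a chain that realises $x^\alpha <_p x^\beta$ starting from $x^\alpha\in B_J\subseteq J$ yields $x^\beta\in J$; crucially, the inductive step works for any $x^\gamma\in J$, not only for elements of $B_J$, because Borel-fixedness is a statement about the whole ideal.

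For the converse, the main effort is to upgrade the combinatorial hypothesis from $B_J$ to all of $J$: I need that whenever $x^\gamma\in J$ and $s\prec_p\gamma_i$, then $e_{i,j}^{+(s)}(x^\gamma)\in J$. Writing $x^\gamma=m\cdot x^\delta$ with $m=x^\mu\in B_J$, I would invoke the Vandermonde identity reduced mod $p$,
\begin{equation*}
\binom{\mu_i+\delta_i}{s}\equiv\sum_{k}\binom{\mu_i}{k}\binom{\delta_i}{s-k}\pmod{p};
\end{equation*}
non-vanishing of the left-hand side forces some index $k$ with $k\prec_p\mu_i$ and $s-k\prec_p\delta_i$, and then the factorization $e_{i,j}^{+(s)}(x^\gamma)=e_{i,j}^{+(k)}(m)\cdot e_{i,j}^{+(s-k)}(x^\delta)$ places the monomial in $J$, because the first factor is there by the hypothesis applied to $m\in B_J$. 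With the strengthened hypothesis in hand, each $g_{i,j}(c)(x^\gamma)$ is a $k$-linear combination of monomials of $J$, hence in $J$; a short induction on a product decomposition of an arbitrary $g\in\mathcal{B}$ into torus and unipotent factors completes the argument. The Vandermonde-mod-$p$ step is the only genuinely nontrivial combinatorial ingredient and is where I expect the main obstacle to lie.
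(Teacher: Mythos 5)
Your proof is correct and follows the standard argument that the paper invokes by citing \cite[Theorem~15.23]{Eis}: reduce to the torus (which acts trivially on monomial ideals) and to the elementary unipotent one-parameter subgroups $g_{i,j}(c)$, expand $g_{i,j}(c)(x^\alpha)$ by the binomial theorem, use the monomial-ideal membership criterion for the forward direction, and use the Vandermonde convolution reduced modulo $p$ to lift the hypothesis from $B_J$ to all of $J$ for the converse. The paper gives no proof of its own, so there is nothing to contrast in detail; your argument is the expected one. One small remark: the appeal to Lucas' theorem is unnecessary, since the paper's relation $s\prec_p a$ is \emph{defined} by $\binom{a}{s}\not\equiv 0 \pmod p$, so the identification of the surviving monomials of $g_{i,j}(1)(x^\alpha)$ is immediate from that definition rather than requiring a digit analysis. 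The Vandermonde-mod-$p$ step, together with the factorization $e_{i,j}^{+(s)}(x^\gamma)=e_{i,j}^{+(k)}(m)\cdot e_{i,j}^{+(s-k)}(x^\delta)$ with $k\prec_p\mu_i$ and $s-k\prec_p\delta_i$, is indeed the non-trivial combinatorial core, and you have it right, including the implicit bounds $k\le\mu_i$ and $s-k\le\delta_i$ that make the factorization into genuine terms possible.
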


\begin{remark}\label{combinpbase}
If $p=0$, then a Borel-fixed ideal $J$ is \emph{strongly stable}: for every $x^\alpha\in J$, for every $i<j$ such that $x_i$ divides $x^\alpha$, then $e^{+(1)}_{i,j}(x^\alpha)=\displaystyle\frac{x_j x^\alpha}{x_i}\in J$. If $J$ is strongly stable, then it is Borel-fixed, irrespective of the characteristic of the field $k$.
\end{remark}

In what follows, we will say that the ideal $J$ is \emph{$p$-Borel} meaning that the ideal $J$ is Borel-fixed in $\Sk$, with $\mathrm{char}(k)=p$. 

\begin{example}\label{ex:BorelChar}\ 

\begin{enumerate}
\item \label{BorelChar_i}
Consider $J=(x_2^{11},x_2^{10}x_1,x_2^2x_1^9,x_2x_1^{10})\subseteq S^{(0,2)}$ with $x_2>x_1>x_0$.\\
The ideal $J$ is not $0$-Borel:  for instance the term $x_2^2x_1^{9}$ belongs to $J$, but $e^{+(1)}_{1,2}(x_2^2x_1^9)=x_2^3x_1^8$ does not. The ideal $J$ is not $5$-Borel: consider the term $x_2x_1^{10}$, and observe that $e^{+(5)}_{1,2}(x_2x_1^{10})=x_2^6x_1^5$ does not belong to $J$. The ideal $J$ is $3$-Borel: one can check that for every $x^\alpha\in B_J$ the condition of Theorem \ref{combinp} holds.

\item
The ideals $J_1=(x_n^3, x_n^2x_{n-1},x_nx_{n-1}^2)$ and $J_2=(x_n^3,x_n^2x_{n-1},x_n^2x_{n-2})$ in  $S^{(0,n)}$ are Borel-fixed for every characteristic of the field $k$, because they are strongly stable. 
 The ideal $J=(x_n^{p^t},x_{n-1}^{p^t},\cdots, x_{j+1}^{p^t}, x_j^{p^t})\subset S^{(0,n)}$ is $p$-Borel, for any $p>0$.  \cite[page 357]{Eis}
\end{enumerate}
\end{example}

In the following corollary, we just observe that a $p$-Borel ideal  is quasi-stable, hence the Pommaret basis is a suitable set of generators to handle it.
\begin{corollary}\label{PBorelAreQS}
Let $J$ be a Borel-fixed ideal in $\Sk$, with $\mathrm{char}(k)=p\geq 0$. Then $J$ is quasi-stable and has a Pommaret basis $\PP(J)$.
\end{corollary}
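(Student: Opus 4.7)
The plan is to verify the quasi-stability condition of Definition \ref{def:qstable}(\ref{qstable_i}) directly from the combinatorial characterization of Borel-fixed ideals in Theorem \ref{combinp}; once $J$ is known to be quasi-stable, the existence of the Pommaret basis $\PP(J)$ will follow immediately from Theorem \ref{qsPB}.

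By the remark after Definition \ref{def:qstable}, I only need to check the condition on terms $x^\alpha \in B_J$. So I will fix such an $x^\alpha$, set $x_i := \min(x^\alpha)$ (so $\alpha_i \geq 1$), take any $x_j > x_i$, and exhibit an $s \geq 0$ with $x_j^s x^\alpha/x_i \in J$. The natural candidate is the ``maximal'' move $e_{i,j}^{+(\alpha_i)}$ of Definition \ref{def:mossep}, which removes every copy of $x_i$ from $x^\alpha$ and replaces them by $x_j$'s. The key observation is that this move is $p$-admissible for every characteristic $p \geq 0$: for $p>0$ the requirement $\alpha_i \prec_p \alpha_i$ reduces to $\binom{\alpha_i}{\alpha_i}=1 \not\equiv 0 \pmod{p}$, and for $p=0$ it reduces to $\alpha_i \leq \alpha_i$. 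Applying Theorem \ref{combinp} to $x^\alpha \in B_J$ therefore puts $e_{i,j}^{+(\alpha_i)}(x^\alpha) = x_j^{\alpha_i} x^\alpha / x_i^{\alpha_i}$ inside $J$.

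To close the argument, I multiply this element by $x_i^{\alpha_i-1} \in \Sk$. Since $J$ is an ideal, the result $x_j^{\alpha_i} x^\alpha / x_i = x_j^{\alpha_i} x^\alpha / \min(x^\alpha)$ still lies in $J$, which is precisely Definition \ref{def:qstable}(\ref{qstable_i}) with $s = \alpha_i$. Hence $J$ is quasi-stable, and Theorem \ref{qsPB} furnishes the Pommaret basis $\PP(J)$. I do not foresee any genuine obstacle: the whole corollary collapses to the trivial fact that $\binom{m}{m}=1$ is coprime to every prime, which is exactly what makes the ``empty out the coordinate $x_i$'' move $p$-admissible in every characteristic, and it is this single $p$-admissible move that bridges the Borel-fixed hypothesis to the quasi-stability requirement.
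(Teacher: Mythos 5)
Your proof is correct and takes essentially the same route as the paper: the paper simply notes that a $p$-Borel ideal fulfills condition~(\ref{BorelEquiv_iii}) of Theorem~\ref{thm:BorelEquiv}, and the verification one would write out there is exactly your observation that the move $e_{i,j}^{+(\alpha_i)}$ is $p$-admissible in every characteristic because $\binom{\alpha_i}{\alpha_i}=1$. You target Definition~\ref{def:qstable}(\ref{qstable_i}) (condition~(\ref{BorelEquiv_i}) of Theorem~\ref{thm:BorelEquiv}) instead of condition~(\ref{BorelEquiv_iii}) and recover it by multiplying back by $x_i^{\alpha_i-1}$, which is an immaterial variation on the same idea.
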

\begin{proof}
If $J$ is $p$-Borel, then it fulfills condition (\ref{BorelEquiv_iii}) of Theorem \ref{thm:BorelEquiv}.
\end{proof}

Corollary \ref{PBorelAreQS} cannot be reversed, as shown by the following example.

\begin{example}
Consider the ideal $J=(x_1,x_2^2,x_3)\subset S^{(0,3)}$.
$J$ is quasi-stable but is not $p$-Borel for any value of $p$.\\
Suppose there is $p\geq 0$ such that $J$ is $p$-Borel: in particular, $1\prec_p 1$ for every $p\geq 0$, hence 
$(x_1/\min(x_1))\cdot x_2$ should belong to $J$, but this is not the case.
\end{example}

\section{An algorithm to compute $p$-Borel ideals with a given Hilbert polynomial}\label{sec:AlgoBorel}

From  Algorithm  \ref{alg:QSideals} \textsc{QuasiStable} presented in Section \ref{sec:AlgorithmQS}, we can obtain the Borel-fixed ideals with an assigned Hilbert polynomial for a field with characteristic $p\geq 0$: it is sufficient to run Algorithm  \ref{alg:QSideals}  and then check which of the ideals output are $p$-Borel, using Theorem \ref{combinp}. However, with a modified version of Lemma \ref{CostrPom}, we can modify Algorithm  \ref{alg:QSideals} and directly obtain the set of $p$-Borel ideal with a given Hilbert polynomial. In our tests, for a fixed Hilbert polynomial, this second way to compute $p$-Borel ideals was faster than computing the whole set of quasi-stable ideals (see Example \ref{ex:borelcharp}).

\begin{definition}\label{minBp}
Let $J$ be a $p$-Borel ideal and consider $x^\alpha \in J$. $x^\alpha$ is \emph{$p$-minimal} if there is no other term in $J$ smaller than $x^\alpha$ w.r.t $<_{p}$.
\end{definition}

\begin{lemma}\label{BorelMinP}
Let $J$ be a $p$-Borel ideal in $\Sk$, let $P(z)$ be its Hilbert polynomial and let $r$ be the Gotzmann number of $P(z)$. For every $s\geq r$, let $x^\alpha\in J_s$ be a $p$-minimal and St-minimal term  in $J_s$ with $\min(x^\alpha)=x_l$. 
Let $M$ be the set of terms $(J_s\cap \T)\setminus \lbrace x^\alpha\rbrace$.  Then $(M)$ is $p$-Borel and  has Hilbert polynomial $P(z)+1$. 
\end{lemma}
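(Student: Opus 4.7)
The Hilbert polynomial claim is an immediate consequence of Lemma~\ref{CostrPom}: by Corollary~\ref{PBorelAreQS} every $p$-Borel ideal is quasi-stable, so the hypotheses of that lemma are met and $(M)$ has Hilbert polynomial $P(z)+1$. The real content of the statement is the $p$-Borel conclusion, which I plan to establish using Theorem~\ref{combinp}: I will show that every $p$-admissible move $e^{+(r)}_{i,j}$ applied to any $x^{\beta'}\in (M)$ yields an element of $(M)$.

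Take such $x^{\beta'}$ and let $x^\gamma=e^{+(r)}_{i,j}(x^{\beta'})$. Since $(M)\subseteq J$ and $J$ is $p$-Borel, $x^\gamma\in J$, and $\deg(x^\gamma)\geq s\geq\reg(J)$ places it in $J_{\geq s}$. The computation inside the proof of Lemma~\ref{CostrPom} shows $J_{\geq s}\setminus(M)=\{x^\alpha x_\ell^{t}:t\geq 0\}$, so I only need to exclude $x^\gamma=x^\alpha x_\ell^{t}$. Assume this and let $x^{\mu'}\in M$ be the (unique) element of the Pommaret basis of $(M)_{\geq s}$ with $x^{\beta'}\in\mathcal C_\PP(x^{\mu'})$. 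Because $\alpha_\ell\geq 1$ gives $\min(x^{\beta'})=x_\ell$, a degree count forces the multiplier to be the pure power $x_\ell^{t}$, so $x^{\beta'}=x^{\mu'}\cdot x_\ell^{t}$ and in particular $x^{\mu'}\in M\subseteq J_s$.

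When $t=0$, the term $x^{\mu'}=x^{\beta'}\in M\subseteq J_s$ is itself $p$-admissibly mapped to $x^\alpha$, which contradicts the $p$-minimality of $x^\alpha$. When $t\geq 1$ and $i>\ell$, the $x_i$-exponents of $x^{\beta'}$ and $x^{\mu'}$ coincide (both equal $\alpha_i+r$), so the condition $r\prec_p\alpha_i+r$ transfers and the same move applied to $x^{\mu'}$ still lands on $x^\alpha$, again giving $x^{\mu'}<_p x^\alpha$ in $J_s$ and contradicting $p$-minimality.

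The step I expect to be the main obstacle is $t\geq 1$ with $i=\ell$: now $x^{\mu'}=x^\alpha x_\ell^{r}/x_j^{r}$, and since the $\ell$-exponent changes, $p$-admissibility does not transfer and $p$-minimality need not be directly violated. My plan is to sidestep $p$-minimality and exploit the stability of $J_{\geq s}$ (Proposition~\ref{prop:varieS}(\ref{prop:varieS_iv})) together with the St-minimality of $x^\alpha$. Iteratively applying the stable move $x^\eta\mapsto x_j x^\eta/x_\ell$ to $x^{\mu'}$ produces the chain $x^\alpha x_\ell^{r-1}/x_j^{r-1},\ldots,x^\alpha x_\ell/x_j$, all forced to lie in $J_s$. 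The final term $x^{\eta_0}:=x^\alpha x_\ell/x_j$ satisfies $\min(x^{\eta_0})=x_\ell$ and $x_j x^{\eta_0}/x_\ell=x^\alpha$, which is an St-move from a different element of $J_s$ to $x^\alpha$, contradicting the St-minimality of $x^\alpha$ and completing the argument.
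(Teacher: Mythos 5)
Your proof is correct, but it takes a substantially longer route than necessary, and the extra machinery you build is a symptom of not using Theorem~\ref{combinp} in its most efficient form. That theorem only asks for the Borel-move condition to be verified on the \emph{minimal monomial basis} of the ideal. Since $(M)$ is stable (Corollary~\ref{PBMin}) and $M$ is its Pommaret basis of pure degree~$s$, the minimal monomial basis of $(M)$ is exactly $M$ (Theorem~\ref{qsPB} together with Remark~\ref{rem:varieP}). So one only needs to check: for $x^\beta\in M\subset J_s$ and $x^\beta<_p x^\gamma$, the term $x^\gamma$ lies in $(M)$. Increasing moves preserve degree, so $\deg(x^\gamma)=s$, and $x^\gamma\in J_s$ because $J$ is $p$-Borel; since $J_s\cap\T=M\cup\{x^\alpha\}$, the only possible escape is $x^\gamma=x^\alpha$, which would give $x^\beta<_p x^\alpha$ with $x^\beta\in J$, contradicting $p$-minimality. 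That is the whole argument, and it is precisely what the paper's one-sentence justification is pointing at. Your approach instead verifies the move-closure for \emph{every} term $x^{\beta'}\in(M)$, which forces you to analyse the factor $x^\delta$ in the Pommaret division $x^{\beta'}=x^{\mu'}x^\delta$; the ensuing parameter $t=\lvert\delta\rvert$ is what creates the three cases, and the step you singled out as the \lq\lq main obstacle\rq\rq\ ($t\geq 1$, $i=\ell$, where you cleverly fall back on St-minimality) is one you never need to face.

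A few smaller remarks on what you did write. The assertion that \lq\lq a degree count forces the multiplier to be the pure power $x_\ell^t$\rq\rq\ is slightly more than a degree count: it uses $\beta'_\ell\geq t+1$ together with the Pommaret cone condition $\max(x^\delta)\leq\min(x^{\mu'})$, which pins $x^\delta$ down to $x_\ell^t$ because any larger variable in $x^\delta$ would violate the cone inequality (since $\mu'_\ell$ would still be positive). Also, you apply a $p$-admissible move to an arbitrary term of $J$ (namely $x^{\beta'}$), not just to a minimal generator; Theorem~\ref{combinp} as stated only covers minimal generators, and extending it to all terms requires a Vandermonde-type splitting of the binomial $\binom{\beta'_i}{r}$ between the generator and the multiplier. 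That extension is standard and true, but worth being aware of, and it is another complication that disappears once you restrict attention to $M=B_{(M)}$. Finally, you reuse $r$ both for the Gotzmann number and for the exponent in $e^{+(r)}_{i,j}$; harmless here, but easy to avoid.
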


\begin{proof}
The ideal $J$ is $p$-Borel, hence it is quasi-stable by Corollary \ref{PBorelAreQS}. Hence, Lemma \ref{CostrPom} applies and we obtain that the ideal generated by $M=J_s\cap \T\setminus \lbrace x^\beta\rbrace$ is quasi-stable and has Hilbert polynomial $P(z)+1$, because the term $x^\alpha$ is St-minimal and $\min(x^\alpha)=x_\ell$.
The ideal  generated by $M$ is also $p$-Borel, since the term $x^\alpha$ that we remove from $J_s$ is $p$-minimal.
\end{proof}

\begin{lemma}\label{lem:x1satpB}
Let $I$ and $J$ be $p$-Borel ideals in $\Sk$, let $P_1(z)$ be the Hilbert polynomial of $I$ and $P_2(z)$ be the Hilbert polynomial of $J$. If, for every $s\gg 0$, we have $I_s\subset J_s$ and $P_1(z) = P_2 (z) + a$, with $a\in \mathbb N$, then $I$ and $J$ have the same $x_{\ell+1}$-saturation and for every $s\gg 0$ there is $x^\alpha \in J_s\setminus I_s$, with $\min(x^\alpha)=x_\ell$, which is St-minimal and $p$-minimal.
\end{lemma}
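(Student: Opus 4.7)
The proof mirrors that of Lemma \ref{lem:x1sat}, replacing Lemma \ref{CostrPom} with Lemma \ref{BorelMinP}. Since every $p$-Borel ideal is quasi-stable (Corollary \ref{PBorelAreQS}), Lemma \ref{lem:x1sat} already gives the first conclusion (same $x_{\ell+1}$-saturation) together with the existence, for $s\gg 0$, of an St-minimal $x^\alpha\in J_s\setminus I_s$ with $\min(x^\alpha)=x_\ell$. What remains is to refine the selection so that $x^\alpha$ is also $p$-minimal in $J$. I argue by induction on $a$, taking $s\geq \max\{\reg(I),\reg(J)\}$ so that $J_s\cap\T$ and $I_s\cap\T$ are the Pommaret bases of $J_{\geq s}$ and $I_{\geq s}$, respectively (Proposition \ref{prop:varieS}, (\ref{prop:varieS_iv})).

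For the base case $a=1$, let $x^\alpha$ be the unique element of $J_s\setminus I_s$; St-minimality and $\min(x^\alpha)=x_\ell$ come from Lemma \ref{lem:x1sat}. For $p$-minimality, suppose $x^\beta<_p x^\alpha$ with $x^\beta\in J$. Since $p$-admissible moves preserve the degree, $x^\beta\in J_s$, and if $x^\beta\in I_s$ then the $p$-Borel property of $I$ forces $x^\alpha\in I_s$, a contradiction. Hence $x^\beta\in J_s\setminus I_s=\{x^\alpha\}$, so $x^\beta=x^\alpha$, contradicting the strict inequality. For the inductive step $a>1$, the goal is to exhibit $x^\alpha\in J_s\setminus I_s$ that is simultaneously St-minimal and $p$-minimal in $J$, with $\min(x^\alpha)=x_\ell$; Lemma \ref{BorelMinP} then produces a $p$-Borel ideal $J'$ with $(J')_s=J_s\setminus\{x^\alpha\}$, Hilbert polynomial $P_2(z)+1$, and $(J')_s\supseteq I_s$, and the inductive hypothesis applied to the pair $(I,J')$ of Hilbert-polynomial difference $a-1$ concludes.

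The delicate part is the existence of a simultaneously St-minimal and $p$-minimal element in $J_s\setminus I_s$ with $\min=x_\ell$. The guiding observation is that, since both endpoints are $p$-Borel with Pommaret bases at degree $s$ differing by the $a$ terms of $J_s\setminus I_s$, one can order these $a$ extra terms so that removing them from $J_s\cap\T$ one at a time always yields the Pommaret basis of a $p$-Borel ideal. By Lemma \ref{BorelMinP} (and the fact that preserving the Pommaret basis after a single-element deletion forces the deleted term to be St-minimal by Lemma \ref{PBMin}, and hence $p$-minimal for the resulting ideal to remain $p$-Borel), each removal targets an element simultaneously St-minimal and $p$-minimal in the current ideal; the first removal selects our $x^\alpha$, with $\min(x^\alpha)=x_\ell$ by the Hilbert-polynomial-constancy argument of Lemma \ref{lem:x1sat} (a Pommaret cone with $\min>x_\ell$ would contribute terms of growing count, contradicting constancy of $a$). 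The main obstacle is justifying the existence of this one-element-at-a-time $p$-Borel descent from $J$ to $I$: a purely $<_p$-minimal choice within $J_s\setminus I_s$ is automatically $p$-minimal in $J$, but need not be St-minimal when $p>0$, owing to non-$p$-admissible St-moves originating from $I$ into $J\setminus I$. Resolving this requires combining the St-structure from quasi-stability (Lemma \ref{lem:x1sat}) with the $p$-structure simultaneously, rather than minimizing with respect to one and then checking the other.
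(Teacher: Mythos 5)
Your base case ($a=1$) is correct and captures a useful mechanism: $J_s\setminus I_s$ is closed under passing $<_p$-downward inside $J_s$ because $I$ is $p$-Borel, so the unique term in that difference set is automatically $p$-minimal in $J$, while Lemma \ref{lem:x1sat} already gives St-minimality and $\min=x_\ell$. The inductive step on $a$, however, is circular as you set it up: to pass from $a-1$ to $a$ you write that ``the goal is to exhibit $x^\alpha\in J_s\setminus I_s$ that is simultaneously St-minimal and $p$-minimal,'' but that \emph{is} the conclusion of the lemma for the pair $(I,J)$. Once such an $x^\alpha$ is found you are finished with no need for the inductive hypothesis, and without it the induction never gets off the ground. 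The ``guiding observation'' about a one-term-at-a-time $p$-Borel descent from $J$ to $I$ is the unproved claim restated, and you explicitly flag it as the ``main obstacle'' without resolving it, so the proposal has a genuine gap.

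The paper's proof does not use induction on $a$. It invokes Lemma \ref{lem:x1sat} to get the first conclusion (same $x_{\ell+1}$-saturation) and the nonempty set of St-minimal terms of $J_s\setminus I_s$ with $\min=x_\ell$, and then closes in a single step by contradiction: if none of those St-minimal terms were $p$-minimal for $J$, then $I$ (equivalently its truncation $I_{\geq s}$) would not be $p$-Borel. This is precisely the step you identified as delicate but did not supply. Your closing diagnosis -- that choosing a $<_p$-minimal element of $J_s\setminus I_s$ need not yield an St-minimal one -- correctly describes the tension the paper's contradiction argument is designed to collapse; making that contradiction explicit is what remains to be done in your attempt.
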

\begin{proof} 
Consider $s\geq \max\lbrace \reg(I),\reg(J)\rbrace$. By Lemma \ref{lem:x1sat}, we know that for every $s\gg 0$ there is at least one term $x^\alpha \in J_s\setminus I_s$, with $\min(x^\alpha)=x_\ell$, which is St-minimal. Suppose by contradiction that among these St-minimal terms there are no $p$-minimal terms. This is against the hypothesis that $I$ (and its truncation at $s\gg 0$) is $p$-Borel.
\end{proof}

In general, the set of  $p$-minimal terms and that of St-minimal terms of a $p$-Borel ideal are not included one in the other, but in the hypothesis of Lemma \ref{lem:x1satpB}, there is always a non-empty intersection between these two sets.
\begin{example}
Consider $S^{(0,3)}$, $\mathrm{char}(k)=2$, and the monomial ideal $J=(x_3^2,x_2^2)$ which is $2$-Borel. The Hilbert polynomial  is $P(z)=4z$, whose Gotzmann number is $r=6$.
According to Definitions \ref{def:StMin} and  \ref{minBp},   the St-minimal terms of $J_{\geq r}$ are $x_0^4x_2^2, x_0^4x_3^2$  while the $2$-minimal elements  of $J_{\geq r}$ are the terms $x_0^4x_2^2, x_0^3x_1x_2^2$.
\end{example}

Thanks to Lemma \ref{BorelMinP}, we can simply obtain an Algorithm to compute all the saturated $p$-Borel ideals in $\Sk$ having Hilbert polynomial $P(z)$, without computing the whole set of quasi-stable ideals with Hilbert polynomial $P(z)$. 
It is sufficient to write a modified version of Algorithms  \ref{alg:Remove} and  \ref{alg:QSideals} using Lemma \ref{BorelMinP} instead of Lemma \ref{CostrPom}. 
More precisely, it is easy to write a procedure $p$\textsc{-MinimalElements}$(J,\ell,n,s,p)$ which computes the terms $x^\alpha$ in $J_s$ with $\min(x^\alpha)=x_\ell$ and $x^\alpha$ both  $p$-minimal and St-minimal for $J_{\geq s}$. 

Then we obtain Algorithm \ref {alg:pRemove} \textsc{$p$-Remove} using  $p$\textsc{-MinimalElements}  instead of \textsc{StMinimal}.

\begin{algorithm}[h] 
\caption{\label{alg:pRemove}$p$\textsc{-Remove}$(p,I,\ell, n, s , q)$}
\begin{algorithmic}[1]
\REQUIRE  $p$ characteristic of the coefficient field;
\REQUIRE  $I$ set of monomials generating a $p$-Borel ideal;
\REQUIRE $\ell$ first index of the variables in the polynomial ring;
\REQUIRE  $n$  least index of the variables of the polynomial ring;
\REQUIRE  $s$ upper bound on $\reg(I)$;
\REQUIRE  $q$ number of terms to remove from $I_s$;
\ENSURE $L$ set of the saturated $p$-Borel ideals $J$  obtained by removing $q$ terms divisible by $x_\ell$ from $I_s$;
\STATE $L\leftarrow \emptyset$;
\IF{$q$=0}
\RETURN\label{algR:step0} $L \cup I^\sat$;
\ELSE
\STATE\label{stepMinp} $M\leftarrow p\textsc{-MinimalElements}(J,\ell,n,s,p)$;
\FORALL{$x^\alpha\in M$}
\STATE\label{stepalgo4ricors} $L\leftarrow L\cup p\textsc{-Remove}(p,I_{s}\setminus\lbrace x^\alpha\rbrace,\ell, n,  s , q-1)$;
\ENDFOR
\ENDIF
\RETURN $L$;
\end{algorithmic}
\end{algorithm}

\begin{theorem}
Algorithm \ref{alg:pRemove}, $p$\textsc{-Remove}$(p,I,\ell, n, s , q)$, returns the set of all $p$-Borel ideals in the polynomial ring $\Sk$ contained in $I_s$, having the same $x_{\ell+1}$-saturation as $I$  
and having Hilbert polynomial $P_1(z)+q$, where $P_1(z)$ is the Hilbert polynomial of $I$.
\end{theorem}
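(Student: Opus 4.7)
The plan is to mirror the proof of Theorem \ref{thm:corrRem}, using Lemma \ref{BorelMinP} in place of Lemma \ref{CostrPom} and Lemma \ref{lem:x1satpB} in place of Lemma \ref{lem:x1sat}. We argue by induction on $q$.

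For the base case $q=0$ the algorithm returns $I^{\mathrm{sat}}$ at line \ref{algR:step0}, which is the unique saturated $p$-Borel ideal containing $I_s$, having the same $x_{\ell+1}$-saturation as $I$ and the same Hilbert polynomial $P_1(z)$ (recall that $s\geq \reg(I)$, so $I$ and $I^{\mathrm{sat}}$ agree from degree $s$ on). For the inductive step $q>0$, at line \ref{stepMinp} the algorithm computes, via $p$\textsc{-MinimalElements}, the set $M$ of terms $x^\alpha\in I_s$ with $\min(x^\alpha)=x_\ell$ that are simultaneously St-minimal and $p$-minimal for $I_{\geq s}$. By Lemma \ref{BorelMinP}, for each such $x^\alpha$ the set $I_s\setminus\{x^\alpha\}$ generates a $p$-Borel ideal with Hilbert polynomial $P_1(z)+1$, contained in $I_s$ and sharing with $I$ the same $x_{\ell+1}$-saturation (this last fact by Lemma \ref{lem:x1satpB}, applied at degree $s$). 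The inductive hypothesis applied to each recursive call at line \ref{stepalgo4ricors} then guarantees that the union $L$ contains exactly the desired $p$-Borel ideals with Hilbert polynomial $P_1(z)+q$.

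To see that nothing is missed, let $J'\subseteq \Sk$ be a saturated $p$-Borel ideal with $J'_s\subseteq I_s$ (so in particular $J'$ and $I$ share the same $x_{\ell+1}$-saturation by Lemma \ref{lem:x1satpB}) and Hilbert polynomial $P_1(z)+q$. Since $\vert I_s\cap \mathbb T(\ell)\vert - \vert J'_s\cap \mathbb T(\ell)\vert = q>0$, Lemma \ref{lem:x1satpB} provides a term $x^\alpha\in I_s\setminus J'_s$ with $\min(x^\alpha)=x_\ell$ which is both St-minimal and $p$-minimal for $I_{\geq s}$; that is, $x^\alpha\in M$. Then $J'_s\subseteq (I_s\setminus\{x^\alpha\})$ and the recursive call with input $I_s\setminus\{x^\alpha\}$ and parameter $q-1$ will, by induction, produce $J'$ in its output.

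Termination is immediate: at each recursive invocation the fourth-to-last argument $q$ strictly decreases and the algorithm stops when it reaches $0$. The only place where a non-trivial property is being used (beyond the quasi-stable case) is the simultaneous St-minimality and $p$-minimality extracted by Lemma \ref{lem:x1satpB}; this is the key step because without the existence of a witness that is \emph{both} St-minimal and $p$-minimal with minimal variable $x_\ell$, the branching at line \ref{stepMinp} could fail to reach some target ideal $J'$. Everything else is a verbatim transcription of the argument behind Theorem \ref{thm:corrRem}.
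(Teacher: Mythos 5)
Your proof is correct and follows essentially the same approach as the paper's: induction on $q$, with the removal step justified by Lemma \ref{BorelMinP} and the preservation of the $x_{\ell+1}$-saturation justified by Lemma \ref{lem:x1satpB}. The one notable difference is that the paper's proof is extremely terse (it says to repeat the argument of Theorem \ref{thm:corrRem} verbatim with the two lemmas swapped, and leaves the completeness direction implicit), whereas you spell out explicitly why nothing is missed: given any target saturated $p$-Borel ideal $J'$ with $J'_s\subseteq I_s$, Lemma \ref{lem:x1satpB} produces a term $x^\alpha\in I_s\setminus J'_s$ with $\min(x^\alpha)=x_\ell$ that is simultaneously St-minimal and $p$-minimal for $I_{\geq s}$, so the recursion branches toward $J'$. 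This is a welcome addition rather than a different route. One small slip: in the base case you write ``containing $I_s$'' where the statement says ``contained in $I_s$''; the conclusion $J=I^{\mathrm{sat}}$ holds either way once you use $\dim_k J_s=\dim_k I_s$ for $s\geq r$, but the direction should match the theorem.
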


\begin{proof}
We simply follow the lines of the proof of Theorem \ref{thm:corrRem}, replacing the argument on Lemma \ref{CostrPom} by Lemma \ref{BorelMinP}.

If $q=0$, Algorithm \ref{alg:pRemove} terminates at line \ref{algR:step0} and its output is correct. 

If $q>0$, at line \ref{stepMinp} the algorithm computes the set of $p$-minimal and St-minimal terms $x^\alpha$ with $\min(x^\alpha)=x_\ell$. By Lemma \ref{BorelMinP}, the set of terms in $I_s\setminus \{x^\alpha\}$ generates a $p$-Borel ideal with Hilbert polynomial $P_1(z)+1$. 

The algorithm terminates because at each recursive call at line \ref{stepalgo4ricors},  the number of terms to remove decreases.

Furthermore, observe that applying Algorithm \ref{alg:pRemove} on the $p$-Borel ideal $I$ with Hilbert polynomial $P(z)$, we obtain as output ideals having the same $x_{\ell+1}$-saturation as $I$, by Lemma \ref{lem:x1satpB}. 
\end{proof}

Finally, Algorithm \ref{alg:PBorel}  computes the list of saturated $p$-Borel ideals in $\Sk$ with Hilbert polynomial $P(z)$ in the following way:  at line \ref{steprem} of Algorithm  \ref{alg:QSideals} \textsc{QuasiStable} we call Algorithm \ref{alg:pRemove} instead of Algorithm \ref{alg:Remove}.

\begin{algorithm}[h]
\begin{algorithmic}[1]
\caption{\label{alg:PBorel}  \textsc{Borel}$(\ell,n,P(z),s,p)$}
\REQUIRE $\ell$ first index of the variables in the polynomial ring;
\REQUIRE  $n$  least index of the variables of the polynomial ring;
\REQUIRE  $P(z)$ admissible Hilbert polynomial;
\REQUIRE $s$  positive integer upper bounding the Gotzmann number of $P(z)$;
\REQUIRE  $p$ characteristic of the coefficient field;
\ENSURE $F$ set of the saturated $p$-Borel ideals $J$ in the polynomial ring $\Sk$ having Hilbert polynomial $P(z)$;
\IF{$P(z)=0$}
\RETURN $\lbrace (1)\rbrace$; 
\ELSE
\STATE$E\leftarrow \textsc{Borel}(\ell+1,n,\Delta P(z),s,p)$; 
\STATE $F\leftarrow \emptyset$;
\FORALL{$J\in E$}
\STATE $I\leftarrow J\cdot k[x_{\ell},\dots,x_n]$;
\STATE $q\leftarrow P(s)-{n-\ell+s \choose s}+\dim_k I_s$;
\IF{$q\geq 0$}
\STATE {$F\leftarrow F\cup p\textsc{-Remove}(p,I,\ell,n,s,q)$};
\ENDIF
\ENDFOR
\ENDIF
\RETURN $F$;
\end{algorithmic}
\end{algorithm}

\begin{theorem}
Algorithm \ref{alg:PBorel},  \textsc{Borel}$(\ell,n,P(z),s,p)$, returns the set of all $p$-Borel sa\-tu\-ra\-ted ideals in the polynomial ring $\Sk$ with Hilbert polynomial $P(z)$.
\end{theorem}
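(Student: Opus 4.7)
The plan is to mirror the proof of Theorem \ref{thm:algoqs} step by step, substituting the $p$-Borel analogues of the two structural lemmas used there: Lemma \ref{BorelMinP} in place of Lemma \ref{CostrPom}, and Lemma \ref{lem:x1satpB} in place of Lemma \ref{lem:x1sat}. We proceed by induction on the number of successive applications of $\Delta$ needed to reduce $P(z)$ to zero (equivalently, on the dimension parameter $n-\ell$). It is enough to run the induction with a fixed $s \geq r$, where $r$ is the Gotzmann number of $P(z)$: by Remark \ref{rem:gotz}(ii), the same $s$ upper bounds the Gotzmann number of $\Delta^m P(z)$ for every $m\geq 0$.

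For the base case $P(z)=0$, the only saturated ideal in $\Sk$ with that Hilbert polynomial is the unit ideal, which is trivially $p$-Borel, and the algorithm returns $\{(1)\}$. For the inductive step, assume the recursive call at line 4 of Algorithm \ref{alg:PBorel} correctly returns the set $E$ of all saturated $p$-Borel ideals in $S^{(\ell+1,n)}$ with Hilbert polynomial $\Delta P(z)$. Because every $p$-Borel ideal is quasi-stable (Corollary \ref{PBorelAreQS}), Remark \ref{prop:ricors} applies: if $I \subseteq \Sk$ is a saturated $p$-Borel ideal with Hilbert polynomial $P(z)$, then $((I,x_\ell)/(x_\ell))^\sat$, viewed in $S^{(\ell+1,n)}$, is generated by $I_{x_\ell x_{\ell+1}}$ and has Hilbert polynomial $\Delta P(z)$. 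One verifies, using Theorem \ref{combinp}, that this ideal is $p$-Borel in $S^{(\ell+1,n)}$, so it appears in $E$ as some $J$.

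The next step is to show that for each $J \in E$, the ideal $I := J \cdot \Sk$ is $p$-Borel in $\Sk$. By Lemma \ref{lem:1morev}, the minimal monomial basis of $J$ (all of whose terms lie in $\mathbb T(\ell+1)$, so in particular have $\alpha_\ell = 0$) remains the minimal monomial basis of $I$ in $\Sk$. For any $p$-admissible increasing move $e^{+(s)}_{i,j}$ on a generator $x^\alpha\in B_J$: if $i\geq \ell+1$, the resulting term is in $J \subseteq I$; if $i=\ell$, admissibility demands $s \prec_p \alpha_\ell = 0$, which forces $s=0$, contradicting $s>0$. Hence Theorem \ref{combinp} gives $I$ is $p$-Borel. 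The quantity $q$ defined at the relevant line equals the difference between $P(s)$ and the Hilbert function of $I$ evaluated at $s$. A trichotomy argument then applies: if $q<0$, Proposition \ref{x1sat} rules out any $p$-Borel ideal with the required Hilbert polynomial sharing the $x_{\ell+1}$-saturation of $I$; if $q=0$, $I$ itself is saturated and is among the desired ideals; and if $q>0$, the call to $p\textsc{-Remove}$ returns exactly the set of saturated $p$-Borel ideals $I' \subseteq \Sk$ with the same $x_{\ell+1}$-saturation as $I$ and with Hilbert polynomial $P(z)$ (by the correctness theorem for $p\textsc{-Remove}$, combined with Lemma \ref{lem:x1satpB}).

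The main obstacle, and the place where the $p$-Borel case genuinely differs from the quasi-stable one, is ensuring that the enumeration performed by $p\textsc{-Remove}$ hits \emph{every} saturated $p$-Borel ideal $I'$ lying above a fixed $J\in E$. This is exactly the content of Lemma \ref{lem:x1satpB}: the chain of intermediate ideals between $I$ and $I'$ can always be refined by successively deleting a single term that is simultaneously St-minimal (so the resulting ideal is still quasi-stable) and $p$-minimal (so the resulting ideal is still $p$-Borel) with minimal variable $x_\ell$ (so the Hilbert polynomial increases by exactly one unit). Together with the inductive hypothesis on $E$, this produces every saturated $p$-Borel ideal in $\Sk$ with Hilbert polynomial $P(z)$, completing the proof.
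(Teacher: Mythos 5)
Your proof is correct and follows the same approach as the paper's, which disposes of the theorem in one line by invoking the arguments of Theorem \ref{thm:algoqs}. You fill in the expected substitutions (Lemma \ref{BorelMinP} for Lemma \ref{CostrPom}, Lemma \ref{lem:x1satpB} for Lemma \ref{lem:x1sat}) and add a genuinely useful detail left implicit in the paper: the explicit verification, via Theorem \ref{combinp} and the observation that all generators have $\alpha_\ell=0$ so that no admissible move $e^{+(s)}_{\ell,j}$ can fire, that $J\cdot\Sk$ is again $p$-Borel.
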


\begin{proof}
It is sufficient to repeat the arguments in the proof of Theorem \ref{thm:algoqs}.
\end{proof}

Observe that if we put $p=0$ as argument of Algorithm \ref{alg:PBorel},   then we obtain the set of stronlgy stable ideals in $\Sk$ having Hilbert polynomial $P(z)$, that is we obtain exactly the same algorithm as the one presented in \cite{CLMR11}. An improved version of the latter algorithm is presented in \cite{L}: on the one hand the algorithm corresponding to $p\textsc{-Remove}$, with an extra input argument, avoids to compute twice the same ideal; on the other hand the efficiency of the procedure is improved by  a slim structure to store the data and quick implementations of the basic operations (see \cite[Sections 4.1 and 4.2]{L}).

\section{Examples}\label{sec:examples}

We  implemented  prototypes of  Algorithms \ref{alg:Remove}, \ref{alg:QSideals}, \ref{alg:pRemove}, \ref{alg:PBorel} and also of  the au\-xiliary functions
\textsc{GotzmannNumber}, \textsc{St-Minimal}, $p$\textsc{-MinimalElements},   for Ma\-ple 16 \cite{Maple}. These implementations can be largely improved, in particular for Algorithms \ref{alg:Remove} and  \ref{alg:pRemove}, for instance following the lines of \cite{L}. Hence, we will not list timings of computation, but we highlight that these prototypes, although non-optimal, allowed us to explicitely compute a wide range of examples of quasi-stable ideals and $p$-Borel ideals with a given Hilbert polynomial.

These prototypes, the following examples and many others are available at the webpage {\footnotesize\tt https://sites.google.com/site/cristinabertone}

\medskip

From the several computations performed, we have practical evidence that in order to compute the list of $p$-Borel ideals with Hilbert polynomial $P(z)$ for a fixed $p\geq 0$, it is faster to use the Algorithm \ref{alg:PBorel} $\textsc{Borel}$, which relies on Lemma \ref{BorelMinP}, than Algorithm  \ref{alg:QSideals} \textsc{QuasiStable} and then detect $p$-Borel ideals, by checking for every ideal in the output of  Algorithm  \textsc{QuasiStable} the equivalent condition of Theorem \ref{combinp}.

\begin{example}\label{ex:borelcharp}
We consider $P(z)=6z-3$ whose Gotzmann number is $r=12$ and look for $p$-Borel ideals in $S^{(0,3)}$.

First, we run  \textsc{QuasiStable}$(0,3,P(z),r)$, and we obtain 322 quasi-stable saturated monomial ideals. From this list, we can detect, for every $p\geq 0$, which ideals are $p$-Borel.

If we directly compute saturated $0$-Borel ideals by Algorithm $\textsc{Borel}$, we find 31 ideals. If we compute the saturated  $2$ and $3$-Borel ideals with Hilbert polynomial $P(z)$ by using  $\textsc{Borel}$, we get 35 and 34 ideals, respectively. In these three cases, the time of computation running Algorithm $\textsc{Borel}$ is  far lower than that of running \textsc{QuasiStable}$(0,3,P(z),r)$: the computation of the quasi-stable ideals took about 8 times the time needed for the computation of $p$-Borel ideals for $p\in\lbrace 0,2,3\rbrace$.
\end{example}

Also in the simplest case, a constant Hilbert polynomial in $S^{(0,2)}$, for different values of the characteristic of the field, we obtain different sets of saturated $p$-Borel ideals with Hilbert polynomial $P(z)$. Remember that every $0$-Borel ideal is $p$-Borel for every prime $p$ (Remark \ref{combinpbase}).

\begin{example}
We consider $S^{(0,2)}$ and the Hilbert polynomial $P(z)=14$, whose Gotzmann number is $r=14$. We directly compute the set $p$-Borel ideals with Hilbert polynomial $P(z)$ in $S^{(0,2)}$, for $p\in \{0,2,3,5,7\}$ using Algorithm $\textsc{Borel}$.

For $p=0$, we obtain 22 saturated  $0$-Borel ideals (namely, strongly stable ideals) having Hilbert polynomial $P(z)$; they are
\[
\begin{array}{l}
J_1=(x_{2}, x_{1}^{14}), \quad J_2=(x_{2}^{2}, x_{2}x_{1}^{6},x_{1}^{8}), \quad J_3=(x_{2}^{2}, x_{1}^{5}x_{2}, x_{1}^{9}), \\
J_4=(x_{2}^{2}, x_{1}^{4}x_{2}, x_{1}^{10}), \quad J_5=(x_{2}^{2}, x_{1}^{3}x_{2}, x_{1}^{11}),\quad J_6=(x_{2}^{2}, x_{1}^{2}x_{2}, x_{1}^{12}),\\
J_7=(x_{2}^{2}, x_{1}x_{2}, x_{1}^{13}), \quad J_8=(x_{2}^{3}, x_{1}^{3}x_{2}^{2}, x_{1}^{5}x_{2}, x_{1}^{6}), 
J_9=(x_{2}^{3}, x_{1}x_{2}^{2}, x_{2}x_{1}^{6}, x_{1}^{7}), \\
 J_{10}=(x_{2}^{3}, x_{1}^{2}x_{2}^{2}, x_{1}^{5}x_{2}, x_{1}^{7}),\quad 
\quad J_{11}=(x_{2}^{3}, x_{1}^{3}x_{2}^{2}, x_{1}^{4}x_{2}, x_{1}^{7}),\\
J_{12}=(x_{2}^{3}, x_{1}x_{2}^{2}, x_{1}^{5}x_{2}, x_{1}^{8}), \quad J_{13}=(x_{2}^{3}, x_{1}^{2}x_{2}^{2}, x_{1}^{4}x_{2}, x_{1}^{8}), \\
J_{14}=(x_{2}^{3}, x_{1}x_{2}^{2}, x_{1}^{4}x_{2}, x_{1}^{9}), \quad
J_{15}=(x_{2}^{3}, x_{1}^{2}x_{2}^{2}, x_{1}^{3}x_{2}, x_{1}^{9}), \\ J_{16}=(x_{2}^{3}, x_{1}x_{2}^{2}, x_{1}^{3}x_{2}, x_{1}^{10}),
\quad J_{17}=(x_{2}^{3}, x_{1}x_{2}^{2}, x_{1}^{2}x_{2}, x_{1}^{11}),\\
J_{18}=(x_{2}^{4}, x_{1}^{2}x_{2}^{3}, x_{1}^{3}x_{2}^{2}, x_{1}^{4}x_{2}, x_{1}^{5}), \quad J_{19}=(x_{2}^{4}, x_{1}x_{2}^{3}, x_{1}^{2}x_{2}^{2}, x_{1}^{5}x_{2},x_{1}^{6}), \\ 
J_{20}=(x_{2}^{4}, x_{1}x_{2}^{3}, x_{1}^{3}x_{2}^{2}, x_{1}^{4}x_{2}, x_{1}^{6}),\quad 
J_{21}=(x_{2}^{4}, x_{1}x_{2}^{3}, x_{1}^{2}x_{2}^{2}, x_{1}^{4}x_{2}, x_{1}^{7}), \\ J_{22}=(x_{2}^{4}, x_{1}x_{2}^{3}, x_{1}^{2}x_{2}^{2}, x_{1}^{3}x_{2},x_{1}^{8}).
\end{array}\]

Every one of the $0$-Borel ideals with Hilbert polynomial $P(z)$ is also $p$-Borel, for every prime $p$. Hence, for the other values of $p$ considered, here we just list the saturated  $p$-Borel ideals which are not strongly stable.

$\begin{array}{rl}
p=2: &(x_2^4, x_2^3x_1^2, x_1^4), (x_2^3,x_2^2 x_1^2, x_1^6), (x_2^3, x_2x_1^4, x_1^6), (x_2^3, x_2x_1^2, x_1^{10}),\\& (x_2^4, x_2x_1^4,x_2^3x_1,  x_1^5), (x_2^4, x_2^2x_1^2, x_2x_1^4, x_1^6);\\
p=3: &(x_2^3, x_2^2x_1^2,x_1^6), (x_2^3, x_2x_1^3,x_1^8), (x_2^4, x_2^3x_1^2, x_2x_1^3,x_1^6),\\& (x_2^4, x_2^3x_1, x_2x_1^3,x_1^7);\\
p=5: &(x_2^3, x_2^2x_1^4,x_1^5), (x_2^4, x_2^3x_1, x_2^2x_1^3,x_1^5);\\
p=7: &(x_2^2,x_1^7).
\end{array}$
\end{example}


\section*{Acknowledgements}
The author deeply thanks prof. Margherita Roggero for suggesting the topic of Borel-fixed ideals in positive characteristic and her encouragement, and prof. Werner Seiler for valuable discussions and suggestions on quasi-stable ideals and Pommaret bases.

\bibliographystyle{amsplain}

\end{document}